\newtheorem{definition}{Definition}[section]
\newtheorem{theorem}{Theorem}[section]
\newtheorem{lemma}{Lemma}[section]
\newtheorem{corollary}{Corollary}[section]
\newcommand{\nn}{\mathbb{N}}
\newcommand{\cc}{\mathbb{C}}
\newcommand{\di}{\displaystyle}
\newcommand{\st}{\subset}
\begin{document}
\title{\bf Disjoint universality connected with  differential operators}

\author{V.Vlachou}         
\date{}
\maketitle 
\begin{abstract} 
In this article, we study disjoint universality for certain sequences of operators, that are connected with the differential operator. Actually, the motivation to study such sequences comes from Universal Taylor series, if you change the role of the center of convergence.
\footnote{ 2020 Mathematics
subject classification: 40A05 (47G10).\\
\textbf{Keywords: } Differential Operator, universality, hypercyclicity universal Taylor series, multiple universality, disjoint universality.}
\end{abstract}

\section{Introduction }
${}$  

Universality and hypercyclicity are notions that describe the phenomenon where certain denumerable sets are dense inside corresponding metric spaces. To be more specific, assume that $X,Y$ are two topological vector spaces. A sequence of linear and continuous operators $T_n:X\to Y, \ n=1,2,\ldots$ is called hypercyclic (or universal) if there exists a vector $x_0\in X$ - called hypercyclic or universal for $(T_n)_n$- such that the orbit $\{T_nx_0: \ n\in\nn\}$ is dense in $Y$. This definition is a generalisation of  the notion of hypercyclicity of a single operator $T:X\to X$, where we consider   $T_n=T^n, \ n\in\nn$, the iterates of $T$. The notions of universality and hypercyclicity are connected with various paths of research in Functional Analysis, Ergodic theory and Dynamical systems, Complex Analysis and  Analytic Number Theory (see for example \cite{Aron}, \cite{GE}, \cite{Bayart}, \cite{steuding1}, \cite{steuding2} ).   

   In the last decade, several researchers studied the notion of disjoint hypercyclicity or universality, which deals with more than one sequence of operators. We say that two sequences of operators $T_n, S_n:X\to Y\ n=1,2,\ldots$ are disjoint hypercyclic, if there exists a vector $x_0\in X$, such that the set $\{(T_nx_0, S_nx_0): n\in\nn\}$ is dense in $Y\times Y$. As far as we know, this definition was first given in \cite{bernal} and \cite{bes}. Some more articles that deal with similar problems are  \cite{ct}, \cite{bes1}, \cite{bes2}, \cite{bes3}, \cite{sa} and \cite{vlachou1}.
   
   In the present article we deal with specific sequences of operators, which have been proved to be universal and we investigate the problem of disjoint universality. 
   To be more specific, fix a simply connected domain  $G\subset \cc$. We consider the sequence of operators  
   $\widetilde{T}_n(f): H(G)\to H(G), \ n\in\nn$:
    $$\widetilde{T}_n(f)(\zeta)=\sum_{j=0}^{n}\dfrac{f^{(j)}(\zeta)}{j!}(-\zeta)^j, \ \zeta\in G,$$
    where $H(G)$ denotes the space of holomorphic functions in $G$ endowed with the topology of uniform convergence on compacta.
    
   In \cite{bcms}, the authors presented  very elegant and original arguments and they proved that the sequence $\widetilde{T}_n$ is universal, if and only if, $0\notin G$. They were  influenced by the work of M. Siskaki in \cite{siskaki}, were the operators $(\widetilde{T_n})_n$ first appeared and the problem of universality was first posed. Actually, these operators have an interesting connection with Universal Taylor Series (we refer to \cite{siskaki} and \cite{bcms} for more details). We would also like to mention that C. Panagiotis in \cite{panagiotis} was the first to prove a result towards this direction. More specifically he proved that if $G$ is an open disk that does not contain zero, then the sequence $\widetilde{T_n} $ is universal. 
   In section 2, motivated by disjoint universality (see for example \cite{vlachou1}, \cite{bernal}, \cite{bes} )   we prove the following:
  \begin{theorem}\label{mult}
Let $G\subset \cc$ be a simply connected domain,  such that $0\notin G$. Let, in addition, $(\lambda_n^{(1)})_n$ and $(\lambda_n^{(2)})_n$ be two sequences of positive integers with $\di\lim_n\lambda_n^{(1)}=+\infty$ and $\di\lim_n\dfrac{\lambda_n^{(2)}}{\lambda_n^{(1)}}=+\infty$.  
Then the sequence of operators $D_n: H(G)\to H(G)\times H(G),$ $D_n(f)=(\widetilde{T}_{\lambda_n^{(1)}}(f), \widetilde{T}_{\lambda_n^{(2)}}(f)), \ n=1,2,\ldots$ is universal.
\end{theorem}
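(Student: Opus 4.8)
The plan is to deduce universality of $(D_n)_n$ from the standard Baire-category criterion for sequences of operators: since $H(G)$ and $H(G)\times H(G)$ are separable Fr\'echet spaces, it suffices to prove that for every pair of non-empty open sets $U\subseteq H(G)$ and $W\subseteq H(G)\times H(G)$ there is an index $n$ with $D_n(U)\cap W\neq\emptyset$; the universal vectors then form a dense $G_\delta$. Because $G$ is simply connected, $\widehat{\cc}\setminus G$ is connected, so by Runge's theorem polynomials are dense in $H(G)$; hence we may take $U$ a basic neighbourhood of a polynomial $p$, $W$ a basic neighbourhood of a pair of polynomials $(g_1,g_2)$, all approximations measured on a common compact $K\subseteq G$, which (replacing $K$ by its polynomial hull, still contained in $G$ since $G$ is simply connected) we take polynomially convex; of course $0\notin K$. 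Everything reduces to: given such $K$, polynomials $p,g_1,g_2$ and $\varepsilon>0$, for all large $n$ one can find $f\in H(G)$ with $\|f-p\|_K<\varepsilon$, $\|\widetilde T_{\lambda_n^{(1)}}(f)-g_1\|_K<\varepsilon$ and $\|\widetilde T_{\lambda_n^{(2)}}(f)-g_2\|_K<\varepsilon$.

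For the construction I would use two elementary facts about $\widetilde T_n$, read off from the definition (or from the identity $\widetilde T_n(h)(\zeta)=\frac{(-1)^n\zeta^{n+1}}{n!}\,\bigl(h(w)/w\bigr)^{(n)}\big|_{w=\zeta}$, valid on $G$ since $0\notin G$): first, $\widetilde T_n(h)=h(0)$ whenever $h$ is a polynomial of degree $\le n$; second, $\widetilde T_n(\zeta^{d})=(-1)^{n}\binom{d-1}{n}\zeta^{d}$ for $d>n$, so $\widetilde T_n$ multiplies a high monomial by a large scalar. Put $\mu=\lambda_n^{(1)}$, $\nu=\lambda_n^{(2)}$, with $n$ so large that $\mu\ge\deg p$ and $\mu<\nu$; then $\widetilde T_\mu(p)=\widetilde T_\nu(p)=p(0)$. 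I look for $f=p+P_1+P_2$ where $P_1$ is a polynomial all of whose monomials have degree in $(\mu,\nu]$ — whence $\widetilde T_\nu(P_1)=0$ automatically — and $P_2$ a polynomial all of whose monomials have degree $>\nu$, so that $\widetilde T_\mu(f)=p(0)+\widetilde T_\mu(P_1)+\widetilde T_\mu(P_2)$ and $\widetilde T_\nu(f)=p(0)+\widetilde T_\nu(P_2)$. Since $0\notin K$ and $K$ is polynomially convex, polynomials vanishing to any prescribed order $N$ at the origin are dense in the functions holomorphic near $K$ (write $g/\zeta^{N}$, approximate it by a polynomial $R$ via Runge, multiply back by $\zeta^{N}$); so there is a polynomial $Q_2=\sum_e b_e\zeta^e$ with all $e\ge N_2$ for a suitable $N_2>\nu$ and $\|Q_2-(g_2-p(0))\|_K<\varepsilon/3$, and one sets $P_2=\sum_e (-1)^{\nu}b_e\binom{e-1}{\nu}^{-1}\zeta^e$, which by the second fact gives $\widetilde T_\nu(P_2)=Q_2\approx g_2-p(0)$ on $K$. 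Symmetrically one produces $P_1$ (monomials of degree $\ge N_1$ for a suitable $N_1>\mu$) with $\widetilde T_\mu(P_1)\approx g_1-p(0)$ on $K$; using $\lambda_n^{(2)}/\lambda_n^{(1)}\to\infty$ one checks that $N_1$ and $\deg P_1$ stay $\le\nu$, so indeed $\widetilde T_\nu(P_1)=0$. Granting the smallness estimates below, $\|f-p\|_K=\|P_1+P_2\|_K<\varepsilon$, and on $K$ one gets $\widetilde T_\mu(f)\approx p(0)+(g_1-p(0))+\widetilde T_\mu(P_2)\approx g_1$ and $\widetilde T_\nu(f)=p(0)+0+\widetilde T_\nu(P_2)\approx g_2$, as required.

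The delicate and decisive point — the one I expect to be the main obstacle — is twofold: making $\|P_1\|_K$ and $\|P_2\|_{\widetilde K}$ (on a slightly larger compact $\widetilde K\supseteq K$) as small as needed, and controlling the cross term $\widetilde T_\mu(P_2)$, i.e.\ the operator attached to the slow index applied to the block built for the fast one. The idea is to choose the starting degrees $N_1,N_2$ not as $\mu+1,\nu+1$ but comparable to $\mu,\nu$ (a constant multiple), so that the binomial factors $\binom{e-1}{\mu},\binom{e-1}{\nu}$ dividing the coefficients are large enough to force $P_1,P_2$ extremely small; then, by Cauchy estimates, $\|\widetilde T_\mu(P_2)\|_K\le C\,c^{\mu}\|P_2\|_{\widetilde K}$ with $C,c$ depending only on $K,\widetilde K$, and since $\widetilde T_\mu$ is applied to a block "living at scale $\nu\gg\mu$", the smallness of $P_2$ at the much coarser scale $\mu$ can be arranged — quantitatively, the binomial gain $\binom{e-1}{\nu}^{-1}$ overwhelms the competing quantities (which are at most exponential in $\nu$, while $\nu/\mu\to\infty$), forcing $\widetilde T_\mu(P_2)<\varepsilon/3$ on $K$; the same mechanism gives $\|P_1\|_K<\varepsilon/3$ as $\mu\to\infty$. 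It is exactly here that the hypothesis $\lim_n\lambda_n^{(2)}/\lambda_n^{(1)}=+\infty$ is used essentially — a bounded ratio would leave no room in this balance. The precise approximation estimates needed here (degree and coefficient bounds for the polynomials $R$ approximating $g/\zeta^{N}$ on a general polynomially convex $K$ with $0\notin K$) are of the kind already developed for the single-operator result in \cite{bcms}, and I would borrow or adapt them. Finally, one may package the same construction through the known disjoint-universality criterion, taking as dense sets the polynomials vanishing at $0$ (on which $D_n\to 0$, since $\widetilde T_{\lambda_n^{(i)}}$ eventually annihilates them) and, for the approximate right inverses, the two families of $P_1$- and $P_2$-type polynomials, whose mutual cross terms $\widetilde T_{\lambda_n^{(1)}}(P_2)$ and $\widetilde T_{\lambda_n^{(2)}}(P_1)$ both tend to $0$ — the latter identically $0$ by the degree restriction, the former by the estimate just described.
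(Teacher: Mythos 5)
Your global framework (Birkhoff transitivity, reduction to polynomial data, the facts that $\widetilde T_n$ sends polynomials of degree $\le n$ to their value at $0$ and acts diagonally on monomials) is sound, and your two identities $\widetilde T_n(h)(\zeta)=\frac{(-1)^n\zeta^{n+1}}{n!}\bigl(h(w)/w\bigr)^{(n)}\big|_{w=\zeta}$ and $\widetilde T_n(\zeta^d)=(-1)^n\binom{d-1}{n}\zeta^d$ are correct. But the step you yourself flag as ``delicate and decisive'' is a genuine gap, not a routine verification, and the mechanism you propose for it does not close. You build $P_2=\sum_e(-1)^\nu\binom{e-1}{\nu}^{-1}b_e\zeta^e$ from a Runge approximant $Q_2=\zeta^{N_2}R$ of $g_2-p(0)$, and you need $\|P_2\|_K\to 0$ (and an exponentially stronger smallness on a neighbourhood of $K$ to kill the cross term $\widetilde T_\mu(P_2)$ via Cauchy estimates, which cost a factor $(M/\delta)^{\mu}$). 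The trouble is that once you multiply the coefficients $b_e$ by the \emph{different} factors $\binom{e-1}{\nu}^{-1}$, the cancellation that keeps $\|Q_2\|_K$ bounded is destroyed, and you must estimate term by term: $\sum_e\binom{e-1}{\nu}^{-1}|b_e|M^e$. The coefficients of $R$ (which approximates $(g_2-p(0))/\zeta^{N_2}$, of sup-norm $\asymp m^{-N_2}$ on $K$, to accuracy $\varepsilon M^{-N_2}$) are only controlled by Bernstein--Walsh-type bounds that grow like $e^{c(K)\deg Q_2}$ with $\deg Q_2\asymp N_2$; so the loss is $e^{c(K)N_2}$, at least $(M/m)^{N_2}$. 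Taking $N_2=A\nu$, the binomial gain is only $\binom{A\nu}{\nu}^{-1}\approx (eA)^{-\nu}$, while the loss is $\bigl((M/m)^{A}\bigr)^{\nu}$ or worse: the gain grows logarithmically in $A$ at exponent scale $\nu$ while the loss grows linearly in $A$ there, so no choice of $A$ wins once $M/m$ is moderately large. Your parenthetical claim that ``the binomial gain overwhelms the competing quantities'' is exactly what fails; the hypothesis $\lambda_n^{(2)}/\lambda_n^{(1)}\to\infty$ helps only with the factor $\binom{e-1}{\mu}=e^{o(\nu)}$ in the cross term, not with this balance.

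The paper avoids this trap by approximating the two target functions not by polynomials but by Laurent polynomials $R_s(z)=\sum_{k=1}^{q}b_k^{(s)}z^{-k}$ (dense in $H(G)$ since $0\notin G$): the finitely many eigenvectors $z^{-k}$ have \emph{fixed} index $k$ and eigenvalues $d_{k,n}=\binom{n+k}{n}\ge n$, so the preimages $R_{s,n}=\sum_k (b_k^{(s)}/d_{k,\lambda_n^{(s)}})z^{-k}$ tend to $0$ trivially, with no competing degree-dependent growth, and the cross term $\widetilde T_{\lambda_n^{(1)}}(R_{2,n})$ tends to $0$ because $d_{k,\lambda_n^{(1)}}/d_{k,\lambda_n^{(2)}}\to 0$. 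The only place where Fekete polynomials and Bernstein--Walsh estimates are needed is to replace $R_{1,n}$ by $zp_n$ with $\deg(zp_n)\le\lambda_n^{(2)}$ (so that $\widetilde T_{\lambda_n^{(2)}}(zp_n)=0$), and there the competition is between $(M/d)^{\lambda_n^{(1)}}$ and $\theta^{\lambda_n^{(2)}}$, which is won precisely because $\lambda_n^{(2)}/\lambda_n^{(1)}\to\infty$. If you want to salvage your route, you would need a quantitative polynomial-preimage lemma of this Bernstein--Walsh type rather than the coefficientwise division by $\binom{e-1}{\nu}$; as written, the decisive estimates are asserted but not available.
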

In order to achieve our goal, we  improve some points in the proof of Theorem 2.10 in \cite{bcms} and we  employ techniques from potential theory and, in particular, ideas presented in the proof of Bernstein-Walsh theorem (see \cite{Ransford}).

In section 3, we deal with the same problem for the more general sequences of operators 
  $T_{a,n}: H(G)\to H(G), \ n\in\nn$\ $a\in\cc\smallsetminus\{-1\}$:
    $$T_{a,n}(f)(\zeta)=\sum_{j=0}^{n}\dfrac{f^{(j)}(\zeta)}{j!}(a\zeta)^j, \ \zeta\in G.$$
As far as we know, these operators were first considered in \cite{bcms}. In Theorem 2.5 in \cite{bcms}, the authors proved that the sequence of operators $T_{a,n}$ is universal if $G\cap (a+1)G\ne \emptyset$. Surprisingly, the tools used in the case $a\ne -1$ are different and the result is not so strong. The same holds for the results of disjoint universality that we will present in this section. We use tools presented in \cite{vlachou1} on multiple universality of Taylor-type operators.
\section{Disjoint Universality for  $\widetilde{T}_n$}
${}$

In this section we prove Theorem \ref{mult}. We will start by presenting some easy lemmas, which we will use in the proof.

The first lemma, is a nice observation mentioned in \cite{bcms}.
\begin{lemma}\label{poly} Let $p$ be a polynomial of degree $d$ and let $n\geq d$. Then:
$$\widetilde{T}_n(p)=p(0).$$
\end{lemma}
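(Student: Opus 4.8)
The plan is to recognise $\widetilde{T}_n(p)$ as the Taylor polynomial of $p$ about the running centre $\zeta$, evaluated at the fixed point $0$. First I would fix $\zeta\in G$ and invoke the finite Taylor formula for polynomials: since $\deg p=d$, one has $p^{(j)}\equiv 0$ for every $j>d$, and therefore
$$p(w)=\sum_{j=0}^{d}\dfrac{p^{(j)}(\zeta)}{j!}(w-\zeta)^j\qquad\text{for all } w\in\cc.$$
(Equivalently, both sides are polynomials in $w$ of degree $\le d$ whose derivatives up to order $d$ coincide at $w=\zeta$, hence they agree identically.)

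Then I would specialise to $w=0$. Because $n\ge d$, each summand with index $j\in\{d+1,\dots,n\}$ occurring in the definition of $\widetilde{T}_n(p)(\zeta)$ carries the factor $p^{(j)}(\zeta)=0$ and thus vanishes, so
$$\widetilde{T}_n(p)(\zeta)=\sum_{j=0}^{n}\dfrac{p^{(j)}(\zeta)}{j!}(-\zeta)^j=\sum_{j=0}^{d}\dfrac{p^{(j)}(\zeta)}{j!}(0-\zeta)^j=p(0).$$
Since $\zeta\in G$ was arbitrary, $\widetilde{T}_n(p)$ is the constant function with value $p(0)$, which is the claim. There is essentially no obstacle here: the only point needing a word of care is that truncating the defining sum at $n$ loses nothing, and that is precisely where the hypothesis $n\ge d$ is used.
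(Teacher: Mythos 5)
Your proof is correct: identifying $\widetilde{T}_n(p)(\zeta)$ as the degree-$n$ Taylor polynomial of $p$ centred at $\zeta$ and evaluated at $0$, which reproduces $p$ exactly once $n\ge d$, is precisely the "nice observation" the paper has in mind (it states the lemma without proof, citing \cite{bcms}). Nothing is missing, and your remark that $n\ge d$ is used exactly to ensure no Taylor coefficients are lost is the right point of care.
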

The next lemma is merely a calculation, which we will use later.
\begin{lemma}\label{zf} Let $f\in H(G)$ and $n\in\nn$. Then:
$$\widetilde{T}_n(zf)(z)=(-1)^n\dfrac{f^{(n)}(z)}{n!}z^{n+1}.$$
\end{lemma}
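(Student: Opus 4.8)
The identity is a plain computation, so the plan is simply to expand $\widetilde{T}_n(zf)$ straight from the definition. Put $g(z):=zf(z)$. First I would compute the higher derivatives of $g$ by the Leibniz rule: since $z\mapsto z$ has all derivatives of order $\ge 2$ equal to $0$, the Leibniz formula collapses to
$$g^{(j)}(z)=z\,f^{(j)}(z)+j\,f^{(j-1)}(z)\qquad(j\ge 0),$$
where the $j=0$ instance just reads $g(z)=zf(z)$. Substituting this into $\widetilde{T}_n(zf)(z)=\sum_{j=0}^n\frac{g^{(j)}(z)}{j!}(-z)^j$ and splitting according to the two summands of $g^{(j)}$, the part coming from $z\,f^{(j)}$ equals $z\sum_{j=0}^n\frac{f^{(j)}(z)}{j!}(-z)^j=z\,\widetilde{T}_n(f)(z)$, while in the part coming from $j\,f^{(j-1)}$ the index $j=0$ contributes nothing and, after the substitution $k=j-1$, it becomes $\sum_{k=0}^{n-1}\frac{f^{(k)}(z)}{k!}(-z)^{k+1}=-\,z\,\widetilde{T}_{n-1}(f)(z)$. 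Hence $\widetilde{T}_n(zf)(z)=z\bigl(\widetilde{T}_n(f)(z)-\widetilde{T}_{n-1}(f)(z)\bigr)$; since $\widetilde{T}_n(f)$ and $\widetilde{T}_{n-1}(f)$ differ only in the single term of index $n$, which is $\frac{f^{(n)}(z)}{n!}(-z)^n$, this is exactly $z\cdot(-1)^n\frac{f^{(n)}(z)}{n!}z^n=(-1)^n\frac{f^{(n)}(z)}{n!}z^{n+1}$.

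The only thing to be careful about is the index shift and the vanishing of the $j=0$ boundary term; there is no genuine obstacle here. I would also note two shortcuts. One can run an induction on $n$ starting from $\widetilde{T}_n(h)(z)=\widetilde{T}_{n-1}(h)(z)+\frac{(-z)^n}{n!}h^{(n)}(z)$ together with the product rule for $(zf)^{(n)}$, where the extra term $n f^{(n-1)}$ precisely cancels the inductive contribution. Alternatively, one can avoid computation altogether by using that $\widetilde{T}_n(h)(\zeta)$ is the value at $0$ of the degree-$n$ Taylor polynomial $P_n(h,\zeta)$ of $h$ about $\zeta$: writing $w f(w)=\zeta f(w)+(w-\zeta)f(w)$ near $\zeta$ gives $P_n(zf,\zeta)=\zeta\,P_n(f,\zeta)+(w-\zeta)\,P_{n-1}(f,\zeta)$, and evaluating at $w=0$ yields $\widetilde{T}_n(zf)(\zeta)=\zeta\bigl(\widetilde{T}_n(f)(\zeta)-\widetilde{T}_{n-1}(f)(\zeta)\bigr)$, i.e. the same telescoping difference as above.
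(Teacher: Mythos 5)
Your proposal is correct and follows exactly the same route as the paper: expand $(zf)^{(j)}=zf^{(j)}+jf^{(j-1)}$ by the Leibniz rule, split the sum to obtain $z\bigl(\widetilde{T}_n(f)-\widetilde{T}_{n-1}(f)\bigr)$, and identify the telescoped difference with the single term $\frac{f^{(n)}(z)}{n!}(-z)^n$. You spell out the index shift and the vanishing $j=0$ boundary term that the paper leaves implicit, but the argument is the same.
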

\begin{proof}
$$\widetilde{T}_n(zf)(z)=\sum_{j=0}^{n}\dfrac{(zf)^{(j)}(z)}{j!}(-z)^j= \sum_{j=0}^{n}\dfrac{zf^{(j)}(z)+jf^{(j-1)}(z)}{j!}(-z)^j=$$
$$=z\widetilde{T}_n(f)(z)-z \widetilde{T}_{n-1}(f)(z)=z(\widetilde{T}_n(f)(z)-\widetilde{T}_{n-1}(f)(z))=z\dfrac{f^{(n)}(z)}{n!}(-z)^n.$$
\end{proof}
We continue with one more observation mentioned in \cite{bcms},  where  the image of the functions $\psi_k(z)=\frac{1}{z^k}$ under the operator $\widetilde{T}_n$ is considered. In the proof of Theorem 2.10 in \cite{bcms}  these functions were used in a very clever way. We plan to do the same.
\begin{lemma}\label{z-k} Let $k,n\in \nn$ and $\psi_k(z)=\frac{1}{z^k}$. Then:
$$\widetilde{T}_{n}(\psi_k)=\biggl[1+\sum_{j=1}^{n}\dfrac{k\cdot (k+1)\cdot \ldots\cdot (k+j-1)}{j!}\biggl]\psi_k.$$
\end{lemma}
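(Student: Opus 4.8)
The plan is a direct differentiation, so I expect this to be one of the routine preliminary lemmas rather than anything delicate. First I would record the $j$-th derivative of $\psi_k(z)=z^{-k}$: applying the power rule repeatedly (or a one-line induction on $j$) gives, for every $j\geq 0$,
$$\psi_k^{(j)}(z)=(-k)(-k-1)\cdots(-k-j+1)\,z^{-k-j}=(-1)^j\,k(k+1)\cdots(k+j-1)\,z^{-k-j},$$
with the empty product understood to be $1$ when $j=0$, so that the $j=0$ case reads $\psi_k^{(0)}=\psi_k$.

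Next I would substitute this into the definition $\widetilde{T}_n(\psi_k)(z)=\sum_{j=0}^{n}\frac{\psi_k^{(j)}(z)}{j!}(-z)^j$ and simplify the general term. The $j$-th summand becomes
$$\frac{(-1)^j\,k(k+1)\cdots(k+j-1)\,z^{-k-j}}{j!}\,(-1)^j z^j=\frac{k(k+1)\cdots(k+j-1)}{j!}\,z^{-k},$$
since the two factors $(-1)^j$ cancel and $z^{-k-j}\cdot z^j=z^{-k}$. Hence every term is a scalar multiple of $\psi_k$, the scalar for $j=0$ being $1$. Summing over $j=0,1,\ldots,n$ therefore yields
$$\widetilde{T}_n(\psi_k)=\Biggl[1+\sum_{j=1}^{n}\frac{k(k+1)\cdots(k+j-1)}{j!}\Biggr]\psi_k,$$
which is exactly the asserted identity.

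There is no real obstacle here; the only points that require a moment's care are the cancellation of the two sign factors $(-1)^j$ (so that no alternating signs survive) and the empty-product convention for $j=0$, which is precisely what produces the leading $1$ inside the bracket. I would present the argument essentially as the displayed computation above, with the derivative formula for $\psi_k^{(j)}$ stated first.
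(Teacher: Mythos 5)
Your computation is correct: the derivative formula $\psi_k^{(j)}(z)=(-1)^j\,k(k+1)\cdots(k+j-1)\,z^{-k-j}$ is right, the two sign factors cancel against $(-z)^j$, and the powers of $z$ collapse to $z^{-k}$, giving exactly the stated coefficient. The paper states this lemma without proof (attributing it to the cited reference), and your direct differentiation is precisely the routine verification one would supply.
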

In view of the above lemma, we will use the following notation to simplify our presentation.
\begin{definition} Let $k,n\in \nn$. We set:
$$d_{k,n}=1+\sum_{j=1}^{n}\dfrac{k\cdot (k+1)\cdot \ldots\cdot (k+j-1)}{j!}\overset{*}{=}\binom{n+k}{n}.$$
(*: Well known summation formula easy to prove with induction to n).
\end{definition}
The following lemma is yet another key-point used in \cite{bcms}.
\begin{lemma}\label{l1}Let  $(\lambda_n^{(1)})_n$  be a sequence of positive integers with \\ $\di\lim_n\lambda_n^{(1)}=+\infty$. Then $$\lim_n d_{k, \lambda_n^{(1)}}=+\infty, \ \forall k\in\nn.$$
\end{lemma}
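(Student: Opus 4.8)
The plan is to reduce everything to a crude but manifestly unbounded lower bound on $d_{k,n}$ and then substitute $n=\lambda_n^{(1)}$. First I would invoke the closed form $d_{k,n}=\binom{n+k}{n}$ already recorded in the definition (equivalently, one may argue straight from the defining sum). Writing
$$d_{k,n}=\binom{n+k}{n}=\frac{(n+1)(n+2)\cdots(n+k)}{k!},$$
each of the $k$ factors in the numerator is at least $n$, so $d_{k,n}\geq \dfrac{n^{k}}{k!}\geq \dfrac{n}{k!}$ whenever $k\geq 1$. Alternatively, and even more elementarily, every summand in the original definition of $d_{k,n}$ equals $\dfrac{k(k+1)\cdots(k+j-1)}{j!}=\binom{k+j-1}{j}\geq 1$, since $k\geq 1$ forces $k+j-1\geq j$; summing gives the still cleaner estimate $d_{k,n}\geq 1+n$ valid for every $n\in\nn$.

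Then I would simply put $n=\lambda_n^{(1)}$: for each fixed $k\in\nn$ we obtain $d_{k,\lambda_n^{(1)}}\geq 1+\lambda_n^{(1)}$, and since $\lim_n\lambda_n^{(1)}=+\infty$ by hypothesis, the right-hand side tends to $+\infty$, hence so does $d_{k,\lambda_n^{(1)}}$. There is no genuine obstacle; the only points worth a word are that the lower bound must hold uniformly in $n$ for the fixed $k$ (it does, since $d_{k,n}\geq 1+n$ holds for all $n$) and that $k$ is arbitrary but held fixed throughout, so that any residual dependence of the bound on $k$ — for instance the constant $1/k!$ in the first variant — is harmless.
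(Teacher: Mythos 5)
Your proof is correct and follows essentially the same route as the paper, which simply notes that $d_{k,n}\geq n$ for all $k,n\in\nn$ (a bound obtained exactly as in your second variant, since every summand in the defining sum is at least $1$) and concludes. The extra binomial-coefficient estimate is fine but unnecessary for the statement at hand.
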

\begin{proof} For every $k,n\in\nn$: $d_{k,n}\geq n$. The result follows.
\end{proof}
Let us see one similar result which  is connected with our effort to deal with disjoint universality.
\begin{lemma}\label{l2}Let  $(\lambda_n^{(1)})_n$ and $(\lambda_n^{(2)})_n$ be two sequences of positive integers such that $\di\lim_n\dfrac{\lambda_n^{(2)}}{\lambda_n^{(1)}}=+\infty$. Then $$\lim_n \dfrac{d_{k, \lambda_n^{(2)}}}{d_{k, \lambda_n^{(1)}}}=+\infty, \ \forall k\in\nn.$$
\end{lemma}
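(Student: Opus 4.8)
The plan is to reduce the statement to the closed form for $d_{k,n}$ recorded in the definition, $d_{k,n}=\binom{n+k}{n}$. Writing the binomial coefficient as a product, $\binom{n+k}{n}=\dfrac{(n+1)(n+2)\cdots(n+k)}{k!}$, the factor $k!$ cancels in the quotient, so that
$$\frac{d_{k,\lambda_n^{(2)}}}{d_{k,\lambda_n^{(1)}}}=\prod_{i=1}^{k}\frac{\lambda_n^{(2)}+i}{\lambda_n^{(1)}+i},$$
a product of exactly $k$ factors. This elementary rewriting is the one idea in the proof; everything after it is a routine estimate.

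Next I would note that, because $\lambda_n^{(2)}/\lambda_n^{(1)}\to+\infty$, we have $\lambda_n^{(2)}\geq\lambda_n^{(1)}$ for all sufficiently large $n$, hence every factor in the product above is $\geq 1$ for such $n$. Discarding all factors except the one with $i=1$ then gives
$$\frac{d_{k,\lambda_n^{(2)}}}{d_{k,\lambda_n^{(1)}}}\geq\frac{\lambda_n^{(2)}+1}{\lambda_n^{(1)}+1}\geq\frac{\lambda_n^{(2)}}{\lambda_n^{(1)}+1}\geq\frac{1}{2}\cdot\frac{\lambda_n^{(2)}}{\lambda_n^{(1)}},$$
using $\lambda_n^{(1)}+1\leq 2\lambda_n^{(1)}$. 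Letting $n\to\infty$ and invoking the hypothesis finishes the argument.

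There is no real obstacle here; if anything, the parameter $k$ is a red herring, since a single factor of the product already tends to $+\infty$ while the remaining $k-1$ factors only help. The only point requiring a moment's care is to pass first to $n$ large enough that $\lambda_n^{(2)}\geq\lambda_n^{(1)}$, so that discarding the other factors is legitimate; this is the mild difference from Lemma \ref{l1}, whose proof rested on the cruder bound $d_{k,n}\geq n$.
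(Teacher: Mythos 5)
Your proof is correct and rests on exactly the same identity the paper uses, namely $\dfrac{d_{k,\lambda_n^{(2)}}}{d_{k,\lambda_n^{(1)}}}=\prod_{i=1}^{k}\dfrac{\lambda_n^{(2)}+i}{\lambda_n^{(1)}+i}$; the paper simply states that the result "follows easily" from this equality, while you spell out the routine estimate. No difference in substance.
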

\begin{proof} The result follows easily from the equality:
$$\dfrac{d_{k, \lambda_n^{(2)}}}{d_{k, \lambda_n^{(1)}}}=\dfrac{(\lambda_n^{(2)}+1)(\lambda_n^{(2)}+2)\ldots(\lambda_n^{(2)}+k)}{(\lambda_n^{(1)}+1)(\lambda_n^{(1)}+2)\ldots(\lambda_n^{(1)}+k)}.$$ 
\end{proof}
At this point we would like to mention two dense subsets of $H(G)$, which will help us avoid constants that are hard to deal with.
\begin{lemma}\label{dense} Let $G\subset\cc$ be a simply connected domain, such that $0 \notin G$. \\ Then the sets $\{p(z): p \text{ polynomial with } p(0)=0\}$  and  \\ $\{p(\frac{1}{z}): p \text{ polynomial with } p(0)=0\} $ 
are dense in $H(G)$.
\end{lemma}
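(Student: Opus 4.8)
The plan is to reduce everything to Runge's theorem. To prove that a subset $S$ of $H(G)$ is dense it suffices to show that for every $f\in H(G)$, every compact $K\st G$ and every $\varepsilon>0$ there is $s\in S$ with $\sup_K|f-s|<\varepsilon$; and since $G$ is simply connected we may enlarge $K$ and assume that $\cc\sm K$ is connected (a compact subset of a simply connected domain is contained in a larger compact subset with connected complement).

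First I would treat the polynomials vanishing at $0$. The crux is the elementary but decisive remark that the origin lies in the unbounded component of $\cc\sm K$: once $\cc\sm K$ is connected this is immediate, since $0\in\cc\sm K$ (because $K\st G$ and $0\notin G$) and $\cc\sm K$ then has a single, unbounded component. Consequently $K\cup\{0\}$ is a compact set whose complement is still connected, and the function equal to $1$ on a neighbourhood of $K$ and to $0$ on a neighbourhood of $0$ is holomorphic near $K\cup\{0\}$; by Runge's theorem it is approximated uniformly there by a polynomial $q$, so that $\sup_K|q-1|$ and $|q(0)|$ are as small as we please. Then $p:=q-q(0)$ is a polynomial with $p(0)=0$ and $\sup_K|p-1|$ arbitrarily small, i.e.\ constants are approximable on $K$ by polynomials vanishing at $0$. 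Now, given $f$, Runge's theorem yields a polynomial $Q$ with $\sup_K|f-Q|<\varepsilon/2$; writing $Q=(Q-Q(0))+Q(0)$ and replacing the constant $Q(0)$ by $Q(0)\,p$, where $p(0)=0$ and $\sup_K|p-1|$ is so small that $|Q(0)|\sup_K|p-1|<\varepsilon/2$, produces a polynomial vanishing at $0$ within $\varepsilon$ of $f$ on $K$. This proves the first assertion.

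For the second set I would pass through the M\"obius map $\phi(z)=1/z$. Since $0\notin G$, $\phi$ maps $G$ biholomorphically onto $G^{*}:=\{1/z:z\in G\}$; as $\phi$ extends to an automorphism of $\hat\cc$ and $\infty\notin G^{*}$ (which holds precisely because $0\notin G$), the image $G^{*}$ is again a simply connected domain, and $0\notin G^{*}$ because $\infty\notin G$. The pull-back $g\mapsto g\circ\phi$ is a topological isomorphism $H(G^{*})\to H(G)$ carrying a polynomial $p$ to the function $z\mapsto p(1/z)$. Applying the case already proved to $G^{*}$, the set $\{p: p\text{ polynomial},\ p(0)=0\}$ is dense in $H(G^{*})$, and transporting this through the isomorphism shows that $\{p(1/z): p\text{ polynomial},\ p(0)=0\}$ is dense in $H(G)$.

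The only step that is not pure bookkeeping is the constant-approximation argument, and even there the sole real idea — the one also underlying the classical proof that polynomials are dense in $H(G)$ for simply connected $G$ — is that $0$ cannot be enclosed by a compact subset of $G$, so Runge's theorem can decouple the value at $0$ from the values on $K$.
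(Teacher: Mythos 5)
Your proof is correct, but both halves take a genuinely different route from the paper's. For the first set, the paper applies Runge's theorem directly to the function $f(z)/z$, which is holomorphic on $G$ precisely because $0\notin G$: if polynomials $p_n$ satisfy $\sup_K|p_n-\frac{f}{z}|\to 0$, then $zp_n(z)$ is a polynomial vanishing at $0$ that approximates $f$ on $K$. You instead decouple the value at $0$ from the values on $K$ by running Runge on $K\cup\{0\}$ to approximate constants by polynomials vanishing at $0$, and then correct the constant term of an ordinary Runge approximant; this is sound (and correctly isolates the role of the hypothesis $0\notin G$), but the multiply-by-$z$ trick is a one-line shortcut you could have reused. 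For the second set, the paper runs the same trick in reverse: it invokes the pole-prescribing form of Runge's theorem --- legitimate because $(\cc\cup\{\infty\})\sm K$ is connected and contains $0$ --- to approximate $zf(z)$ by rational functions $q_n$ with pole only at $0$, and then $q_n(z)/z$ is of the required form $p(\frac{1}{z})$ with $p(0)=0$. Your conjugation by the inversion $\phi(z)=\frac{1}{z}$ is an equally valid alternative: it avoids the pole-prescribing version of Runge at the cost of verifying that $G^{*}=\phi(G)$ is again a simply connected domain omitting $0$, which you do correctly via connectedness of the spherical complement. The net effect is the same; the paper's argument is shorter and keeps both halves parallel, while yours is more self-contained in that it only ever uses the polynomial form of Runge's theorem.
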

\begin{proof}
Let $f\in H(G)$ and let $K\subset G$ compact. Without loss of generality, we may assume that $K^c$ is connected.
Using Runge's approximation theorem (see for example \cite{rudin}) we may find a sequence of polynomials $(p_n)_n$ such that $\di\sup_{z\in K}|p_n(z)-\frac{f(z)}{z}|\xrightarrow{n\to+\infty}0.$ Then $\di\sup_{z\in K}|zp_n(z)-f(z)|\xrightarrow{n\to+\infty}0,$ so the first part of the lemma follows. \\
Similarly, if we consider a sequence of rational functions $(q_n)_n$ with pole only at 0, such that
 $\di\sup_{z\in K}|q_n(z)-zf(z)|\xrightarrow{n\to+\infty}0$, then $\di\sup_{z\in K}|\frac{q_n(z)}{z}-f(z)|\xrightarrow{n\to+\infty}0$ and the proof is complete. 
\end{proof}
Let, us give one more definition connected with universality and hypercyclicity.
\begin{definition} Assume that $X,Y$ are two topological vector spaces. A sequence $T_n: X\to Y, \ n=1,2,\ldots$ of linear and continuous operators is called transitive if for every two non-empty open sets $U\subset X$ and $V\subset Y$, there exists $n_0\in\nn$ such that $T_{n_0}(U)\cap V\ne\emptyset.$
\end{definition}
We are now ready to give the proof of our result.
\begin{theorem}
Let $G\subset \cc$ be a simply connected domain,  such that $0\notin G$. Let, in addition, $(\lambda_n^{(1)})_n$ and $(\lambda_n^{(2)})_n$ be two sequences of positive integers with $\di\lim_n\lambda_n^{(1)}=+\infty$ and $\di\lim_n\dfrac{\lambda_n^{(2)}}{\lambda_n^{(1)}}=+\infty$.  
Then the sequence of operators $D_n: H(G)\to H(G)\times H(G),$ $D_n(f)=(\widetilde{T}_{\lambda_n^{(1)}}(f), \widetilde{T}_{\lambda_n^{(2)}}(f)), \ n=1,2,\ldots$ is universal.
\end{theorem}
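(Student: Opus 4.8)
Since $H(G)$ and $H(G)\times H(G)$ are separable Fr\'{e}chet spaces and the $D_n$ are continuous and linear, by the Baire category theorem it suffices to prove that $(D_n)_n$ is transitive: for non-empty open $U\subset H(G)$ and $V_1\times V_2\subset H(G)\times H(G)$ I must find $h\in U$ and $n$ with $\widetilde{T}_{\lambda_n^{(1)}}(h)\in V_1$ and $\widetilde{T}_{\lambda_n^{(2)}}(h)\in V_2$. Shrinking the open sets and fixing a single compact $K\subset G$ — which, after enlarging it inside $G$, I may assume has connected complement — I reduce, by Lemma~\ref{dense}, to the case $U\supset\{f:\sup_K|f-q|<\delta\}$ with $q$ a polynomial with $q(0)=0$, and $V_i\supset\{f:\sup_K|f-g_i|<\varepsilon\}$ with $g_i=\sum_{k=1}^{s}a_k^{(i)}\psi_k$. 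For large $n$, Lemma~\ref{poly} gives $\widetilde{T}_{\lambda_n^{(i)}}(q)=q(0)=0$, so it is enough to add to $q$ a perturbation $v$ with $\sup_K|v|<\delta$, $\sup_K|\widetilde{T}_{\lambda_n^{(1)}}(v)-g_1|<\varepsilon$ and $\sup_K|\widetilde{T}_{\lambda_n^{(2)}}(v)-g_2|<\varepsilon$.

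I would split $v=v^{(2)}+v^{(1)}$. For the faster index put $v^{(2)}:=\sum_{k=1}^{s}\frac{a_k^{(2)}}{d_{k,\lambda_n^{(2)}}}\psi_k$. By Lemma~\ref{z-k}, $\widetilde{T}_{\lambda_n^{(2)}}(v^{(2)})=\sum_k a_k^{(2)}\psi_k=g_2$ exactly; by Lemma~\ref{l1} (which applies to $(\lambda_n^{(2)})_n$ as well, since $\lambda_n^{(2)}\to+\infty$) we get $\sup_K|v^{(2)}|\to0$; and by Lemma~\ref{l2}, $\widetilde{T}_{\lambda_n^{(1)}}(v^{(2)})=\sum_k a_k^{(2)}\frac{d_{k,\lambda_n^{(1)}}}{d_{k,\lambda_n^{(2)}}}\psi_k\to0$ uniformly on $K$. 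Thus $v^{(2)}$ is exact in the second coordinate and asymptotically negligible in the first, and the entire problem collapses to producing $v^{(1)}=v^{(1)}_n\in H(G)$ with
\[\sup_K|v^{(1)}_n|\to0,\qquad \sup_K|\widetilde{T}_{\lambda_n^{(1)}}(v^{(1)}_n)-g_1|\to0,\qquad \sup_K|\widetilde{T}_{\lambda_n^{(2)}}(v^{(1)}_n)|\to0 .\]

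This is the crux. The obvious candidate $\sum_k\frac{a_k^{(1)}}{d_{k,\lambda_n^{(1)}}}\psi_k$ — the natural right inverse of $\widetilde{T}_{\lambda_n^{(1)}}$ — fails, because by Lemma~\ref{l2} its image under $\widetilde{T}_{\lambda_n^{(2)}}$ is $\sum_k a_k^{(1)}\frac{d_{k,\lambda_n^{(2)}}}{d_{k,\lambda_n^{(1)}}}\psi_k$, which blows up; so the usual disjointness criterion does not apply and $v^{(1)}_n$ must be built by hand. The construction I would attempt is to seek $v^{(1)}_n$ among polynomials all of whose monomials have degree strictly between $\lambda_n^{(1)}$ and $\lambda_n^{(2)}$: such a polynomial vanishes at $0$, and having degree $\le\lambda_n^{(2)}$ it satisfies $\widetilde{T}_{\lambda_n^{(2)}}(v^{(1)}_n)=v^{(1)}_n(0)=0$ by Lemma~\ref{poly}, so the third requirement holds for free. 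What remains is to choose the coefficients so that $v^{(1)}_n$ is uniformly small on $K$ while $\widetilde{T}_{\lambda_n^{(1)}}(v^{(1)}_n)$ is uniformly close to $g_1$ on $K$. Writing $m=\lambda_n^{(1)}$ and using the identity $\widetilde{T}_m(f)(\zeta)=(-1)^m\zeta^{m+1}\frac1{m!}(f/\mathrm{id})^{(m)}(\zeta)$ (this is Lemma~\ref{zf} applied to $f/\mathrm{id}\in H(G)$, legitimate because $0\notin G$; equivalently $\widetilde{T}_m(z^{j})(\zeta)=(-1)^m\binom{j-1}{m}\zeta^{j}$ for $j>m$), this becomes a weighted polynomial approximation problem: $\widetilde{T}_m(v^{(1)}_n)$ is $(-1)^m\zeta^{m+1}$ times a polynomial whose coefficients are those of $v^{(1)}_n$ amplified by the large factors $\binom{j-1}{m}$, and one wants the amplified polynomial to approximate $(-1)^m g_1(\zeta)\zeta^{-(m+1)}$ on $K$ while the unamplified one stays small on $K$. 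This is exactly the circle of ideas in the proof of the Bernstein--Walsh theorem: the trade-off between admissible degree, approximation error and sup-norm on $K$ is governed by the Green function $g_K$ of $\widehat{\cc}\setminus K$ with pole at $\infty$, and the hypothesis $0\notin G$ enters precisely as $g_K(0)>0$, the potential-theoretic separation that powers the estimates; the degree that is forced comes out proportional to $\lambda_n^{(1)}$, hence $\le\lambda_n^{(2)}$ for all large $n$ thanks to $\lambda_n^{(2)}/\lambda_n^{(1)}\to+\infty$. Once $v^{(1)}_n$ is available, $h:=q+v^{(1)}_n+v^{(2)}$ lies in $U$ and $D_n(h)\in V_1\times V_2$ for $n$ large, which proves transitivity.

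The genuinely hard step is this construction of $v^{(1)}_n$: reconciling ``$v^{(1)}_n$ small on $K$'' with ``$\widetilde{T}_{\lambda_n^{(1)}}(v^{(1)}_n)$ close to $g_1$ on $K$'' under the degree bound imposed by $\lambda_n^{(2)}$ is delicate, since the crudest choices of coefficients make $\sup_K|v^{(1)}_n|$ grow exponentially; it is here that one needs the quantitative potential-theoretic estimates from the proof of Bernstein--Walsh, improving the corresponding step of Theorem~2.10 in \cite{bcms}. Everything else — the reduction to transitivity, the normalisation via Lemmas~\ref{dense} and~\ref{poly}, and the treatment of the $g_2$-part via Lemmas~\ref{z-k}, \ref{l1} and~\ref{l2} — is routine bookkeeping.
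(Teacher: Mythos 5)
Your overall architecture coincides with the paper's: reduce to transitivity via Birkhoff, normalise $U$ and $V$ through Lemma \ref{dense}, dispose of the polynomial part with Lemma \ref{poly}, and treat the second coordinate exactly as the paper does with $R_{2,n}=\sum_k b_k^{(2)}d_{k,\lambda_n^{(2)}}^{-1}\psi_k$ via Lemmas \ref{z-k}, \ref{l1} and \ref{l2}. You also correctly diagnose why the naive preimage for the first coordinate cannot be used as is. All of that matches the paper and is sound.

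The gap is that you never actually construct $v^{(1)}_n$: you reduce the theorem to the existence of a polynomial that is small on $K$, whose image under $\widetilde{T}_{\lambda_n^{(1)}}$ approximates $g_1$, and which is annihilated by $\widetilde{T}_{\lambda_n^{(2)}}$, and then you assert that ``Bernstein--Walsh type'' estimates will deliver it --- while yourself flagging this as the genuinely hard step. Two things are missing. First, the concrete construction: the paper does not search for coefficients of a polynomial supported on degrees in $(\lambda_n^{(1)},\lambda_n^{(2)})$; it starts from the exact preimage $R_{1,n}=\sum_k b_k^{(1)}d_{k,\lambda_n^{(1)}}^{-1}\psi_k$ (so that $\widetilde{T}_{\lambda_n^{(1)}}(R_{1,n})=R_1$ on the nose) and replaces it by $zp_n$, where $p_n$ is the Hermite--Fekete interpolant of $R_{1,n}(w)/w$ of degree $\lambda_n^{(2)}-1$. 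The contour-integral remainder formula then yields not only $\|R_{1,n}-zp_n\|_K=O(\theta^{\lambda_n^{(2)}})$ but also the bound $\|(R_{1,n}/z-p_n)^{(\lambda_n^{(1)})}\|_K/\lambda_n^{(1)}!=O\bigl(d^{-\lambda_n^{(1)}}\theta^{\lambda_n^{(2)}}\bigr)$ on the normalised high-order derivative, and Lemma \ref{zf} converts precisely this derivative bound into $\|\widetilde{T}_{\lambda_n^{(1)}}(zp_n)-R_1\|_K\le C\,(M/d)^{\lambda_n^{(1)}}\theta^{\lambda_n^{(2)}}$. Second, and relatedly, you never show where the hypothesis $\lambda_n^{(2)}/\lambda_n^{(1)}\to+\infty$ enters quantitatively; in the paper it is exactly the observation that $(M/d)^{\lambda_n^{(1)}}\theta^{\lambda_n^{(2)}}=\bigl((M/d)^{\lambda_n^{(1)}/\lambda_n^{(2)}}\theta\bigr)^{\lambda_n^{(2)}}\to0$, i.e.\ the exponentially good degree-$\lambda_n^{(2)}$ approximation must beat the exponential loss incurred by differentiating $\lambda_n^{(1)}$ times. (A further minor inaccuracy: $0\notin G$ is not used through $g_K(0)>0$; it is used so that $\psi_k$ and $R_{1,n}/z$ are holomorphic on $G$ and $\min_{z\in K}|z|>0$, while the Fekete estimate is applied on a contour $\Gamma\subset G\smallsetminus K$.) Without the explicit construction and these two estimates the argument is incomplete at its crux.
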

\begin{proof} In view of Birkhoff's transitivity theorem (see for example \cite{GE}), it suffices to prove that the sequence $(D_n)_n$ is transitive. 
So, let $U$ be an open subset of $H(G)$ and $V$ be an open subset of $H(G)\times H(G)$. Our aim is to find a function $f\in U$  such that $D_{n_0}(f)\in V,$ for some $n_0\in\nn$.\\
  In view of Lemma \ref{dense},  we may assume that:
$$U=\{f\in H(G): ||f-p||_K<\varepsilon\}$$
$$\text{ and }$$
$$V=\{f_1,f_2\in H(G)\times H(G): ||f_s-R_s||_K<\varepsilon, \ s=1,2\},$$
where $K$ is a compact subset of $G$ with connected complement, $\varepsilon>0$, $p$ is a polynomial that vanishes at zero and $R_s(z)=\frac{b_1^{(s)}}{z}+\frac{b_2^{(s)}}{z^2}+\ldots+\frac{b_q^{(s)}}{z^q},$ $ \ b_k^{(s)}\in \cc, k=1,2,\ldots q, \ s=1,2$.\\
For every $n\in\nn$, we set:
$$R_{1,n}(z)=\sum_{k=1}^{q}\dfrac{b_k^{(1)}}{d_{k,\lambda_n^{(1)}}}z^{-k} \  \text{ and } \ R_{2,n}(z)=\sum_{k=1}^{q}\dfrac{b_k^{(2)}}{d_{k,\lambda_n^{(2)}}}z^{-k}.$$
Note that in view of Lemma \ref{z-k}:
\begin{equation}\label{tr1}\widetilde{T}_{\lambda_n^{(1)}}(R_{1,n})=\sum_{k=1}^{q}\dfrac{b_k^{(1)}}{d_{k,\lambda_n^{(1)}}}\widetilde{T}_{\lambda_n^{(1)}}(\psi_k)=\sum_{k=1}^{q}\dfrac{b_k^{(1)}}{d_{k,\lambda_n^{(1)}}} d_{k,\lambda_n^{(1)}}\psi_k=R_1 \end{equation}
$$\text{ and }$$
\begin{equation}\widetilde{T}_{\lambda_n^{(2)}}(R_{2,n})=\sum_{k=1}^{q}\dfrac{b_k^{(2)}}{d_{k,\lambda_n^{(2)}}}\widetilde{T}_{\lambda_n^{(2)}}(\psi_k)=\sum_{k=1}^{q}\dfrac{b_k^{(2)}}{d_{k,\lambda_n^{(2)}}} d_{k,\lambda_n^{(2)}}\psi_k=R_2.
\end{equation}
Moreover,
$$\widetilde{T}_{\lambda_n^{(1)}}(R_{2,n})=\sum_{k=1}^{q}\dfrac{b_k^{(2)}}{d_{k,\lambda_n^{(2)}}}\widetilde{T}_{\lambda_n^{(1)}}(\psi_k)=\sum_{k=1}^{q}\dfrac{b_k^{(2)}}{d_{k,\lambda_n^{(2)}}} d_{k,\lambda_n^{(1)}}\psi_k. $$
Therefore, if $m=\min_{z\in K}|z|>0$ we have:
\begin{equation}\label{r1}
||R_{s,n}||_K\leq\sum_{k=1}^{q}\dfrac{|b_k^{(s)}|}{d_{k,\lambda_n^{(s)}}}\dfrac{1}{m^k}\xrightarrow{n\to+\infty}0, \ s=1,2
\end{equation}
\begin{equation}
||\widetilde{T}_{\lambda_n^{(1)}}(R_{2,n})||_K\leq
\sum_{k=1}^{q}\dfrac{|b_k^{(2)}|}{d_{k,\lambda_n^{(2)}}} d_{k,\lambda_n^{(1)}} \dfrac{1}{m^k}\xrightarrow{n\to+\infty}0
\end{equation}
where we have used Lemmas \ref{l1} and \ref{l2}.\\
Now, the function $R_{2,n}$ for $n$ large enough is suitable for our purposes, but $R_{1,n}$ is not. This happens because we can not control $\widetilde{T}_{\lambda_n^{(2)}}(R_{1,n})$. So we will approximate $R_{1,n}$ by polynomials using ideas presented in the proof of the theorem Bernstein-Walsh (see in \cite{Ransford}p.170).\\
Fix a closed contour $\Gamma$ in $G\smallsetminus K$ such that $\Gamma$ winds once around each point of $K$ and zero times around each point of $\cc\smallsetminus G$. Given $n\geq 2$, let $q_n$ be the Fekete polynomial of degree $n$ of $K$ (see definition 5.5.3 in \cite{Ransford}) and define:
$$p_n(z)=\dfrac{1}{2\pi i}\int_{\Gamma}\dfrac{R_{1,n}(w)}{wq_{\lambda_n^{(2)}(w)}}\cdot\dfrac{q_{\lambda_n^{(2)}}(z)-q_{\lambda_n^{(2)}}(w)}{z-w}dw, \ z\in K$$
Then $p_n$ is a polynomial of degree at most $\lambda_n^{(2)}-1$. \\
Using Cauchy's integral formula we have:
$$\dfrac{R_{1,n}(z)}{z}-p_n(z)=\dfrac{1}{2\pi i}\int_{\Gamma}\dfrac{R_{1,n}(w)}{w(w-z)}\cdot\dfrac{q_{\lambda_n^{(2)}}(z)}{q_{\lambda_n^{(2)}}(w)}dw, \ z\in K.$$
More generally, for $j\in\nn$:
$$\biggl(\dfrac{R_{1,n}(z)}{z}\biggl)^{(j)}-p^{(j)}_n(z)=\dfrac{j!}{2\pi i}\int_{\Gamma}\dfrac{R_{1,n}(w)}{w(w-z)^{j+1}}\cdot\dfrac{q_{\lambda_n^{(2)}}(z)}{q_{\lambda_n^{(2)}}(w)}dw, \ z\in K.$$
In view of the Theorem 5.5.7 in \cite{Ransford}, there exists $\theta\in (0,1)$ such that for $n$ large enough:
$$\dfrac{||q_{\lambda_n^{(2)}}||_{K}}{\di \min_{w\in\Gamma}|q_{\lambda_n^{(2)}}(w)|}\leq \theta^{\lambda_n^{(2)}}.$$ 
Now, if $\di M=\max_{z\in K}|z|$ and $d=d(\Gamma,K)>0$ for $n$ large enough  we have:
$$
||R_{1,n}-zp_n||_K\leq M \biggl|\biggl|\dfrac{R_{1,n}(z)}{z}-p_n(z)\biggl|\biggl|_K\leq M\dfrac{\ell(\Gamma)}{2\pi}\cdot\dfrac{||R_{1,n}||_{\Gamma}}{\di\min_{w\in\Gamma}|w|\ d}\cdot\theta^{\lambda_n^{(2)}}$$
Thus, using relation (\ref{r1}) we have:
\begin{equation}
||zp_n(z)||_K\leq ||R_{1,n}-zp_n||_K+||R_{1,n}||_K\xrightarrow{n\to+\infty}0.
\end{equation}
Additionally, for $n$ large enough
$$\dfrac{||(\dfrac{R_{1,n}}{z}-p_n)^{(\lambda_n^{(1)})}||_K}{\lambda_n^{(1)}!}\leq \dfrac{\ell(\Gamma)}{2\pi \ d}\cdot\dfrac{||R_{1,n}||_{\Gamma}}{\di\min_{w\in\Gamma}|w|\ d^{\lambda_n^{(1)}}}\cdot\theta^{\lambda_n^{(2)}}$$
In view of Lemma \ref{zf} we have:
$$||\widetilde{T}_{\lambda_n^{(1)}}(R_{1,n})-\widetilde{T}_{\lambda_n^{(1)}}(zp_n)||_K=|| \widetilde{T}_{\lambda_n^{(1)}}\biggl(z\cdot (\frac{R_{1,n}}{z}-p_n)\biggl) ||_K=$$
$$=||(-1)^{\lambda_n^{(1)}}\dfrac{(\frac{R_{1,n}}{z}-p_n)^{(\lambda_n^{(1)})}}{\lambda_n^{(1)}!} \cdot z^{\lambda_n^{(1)}+1}||_K\leq ||(\dfrac{R_{1,n}}{z}-p_n)^{\lambda_n^{(1)}}||_K \dfrac{M^{\lambda_n^{(1)}+1}}{{\lambda_n^{(1)}}!}\leq$$
$$\leq \dfrac{\ell(\Gamma)}{2\pi \ d}\cdot\dfrac{||R_{1,n}||_{\Gamma}}{\di\min_{w\in\Gamma}|z|\ d^{\lambda_n^{(1)}}}\cdot\theta^{\lambda_n^{(2)}} M^{\lambda_n^{(1)}+1}\xrightarrow{n\to+\infty}0$$
(Note that $\dfrac{M^{\lambda_n^{(1)}}}{d^{\lambda_n^{(1)}}}\theta^{\lambda_n^{(2)}}=\biggl(\dfrac{M^{\frac{\lambda_n^{(1)}}{\lambda_n^{(2)}}}}{d^{\frac{\lambda_n^{(1)}}{\lambda_n^{(2)}}}}\theta\biggl)^{\lambda_n^{(2)}}$).\\
Therefore in view of relation (\ref{tr1}):
\begin{equation}
||\widetilde{T}_{\lambda_n^{(1)}}(zp_n)-R_1||_K\xrightarrow{n\to+\infty}0.
\end{equation}
Moreover, for every $n$, the degree of the  polynomial $zp_n$ is  at most  $\lambda_n^{(2)}$. So taking Lemma \ref{poly} into consideration:
\begin{equation}
\widetilde{T}_{\lambda_n^{(2)}}(zp_n)=0.
\end{equation}
The same argument leads us to the conclusion that for $n$ large enough:
\begin{equation}
\widetilde{T}_{\lambda_n^{(s)}}(p)=p(0)=0, \ s=1,2.
\end{equation}
Set $f(z)=p(z)+zp_{n_0}(z)+R_{2,n_0}(z),$ for $n_0\in\nn$ large enough. \\
Then:\\
$\bullet$ $||f-p||_K\leq ||zp_{n_0}||_K+||R_{2,n_0}||_K.$\\
$\bullet$ $||\widetilde{T}_{\lambda_{n_0}^{(1)}}(f)-R_1||_K\leq ||\widetilde{T}_{\lambda_{n_0}^{(1)}}(p)||_K+
||\widetilde{T}_{\lambda_{n_0}^{(1)}}(zp_{n_0})-R_1||_K+||\widetilde{T}_{\lambda_{n_0}^{(1)}}(R_{2,n_0}||_K$\\
$\bullet$ $||\widetilde{T}_{\lambda_{n_0}^{(2)}}(f)-R_2||_K=||\widetilde{T}_{\lambda_{n_0}^{(2)}}(p)+\widetilde{T}_{\lambda_{n_0}^{(2)}}(zp_{n_0})+\widetilde{T}_{\lambda_{n_0}^{(2)}}(R_{2,n_0})-R_{2,n_0}||_K=0$\
Therefore for $n_0$ large enough $f\in U$ and $D_{n_0}(f)=(\widetilde{T}_{\lambda_{n_0}^{(1)}}(f), \widetilde{T}_{\lambda_{n_0}^{(2)}}(f))\in V$.
\end{proof}
All the steps of the previous proof apply even if we have more than two sequences of operators, thus the following theorem is also true.
\begin{theorem}\label{mult2}
Let $G\subset \cc$ be a simply connected domain,  such that $0\notin G$. Let, in addition, $(\lambda_n^{(\sigma)})_n, \ \sigma=1,2,\ldots,\sigma_0$  be a finite collection of sequences of positive integers with $\di\lim_n\lambda_n^{(1)}=+\infty$ and $\di\lim_n\dfrac{\lambda_n^{(\sigma+1)}}{\lambda_n^{(\sigma)}}=+\infty,$ $ \ \sigma=1,2,\ldots,\sigma_0-1$.  
Then the sequence of operators $D_n: H(G)\to [H(G)]^{\sigma_0},$ $D_n(f)=(\widetilde{T}_{\lambda_n^{(1)}}(f), \widetilde{T}_{\lambda_n^{(2)}}(f), \ldots,\widetilde{T}_{\lambda_n^{(\sigma_0)}}(f) ), \ n=1,2,\ldots$ is  universal.
\end{theorem}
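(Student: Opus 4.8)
The plan is to run the proof of Theorem \ref{mult} for all $\sigma_0$ sequences at once; only one genuinely new point arises. By Birkhoff's transitivity theorem (see for example \cite{GE}) it suffices to prove that $(D_n)_n$ is transitive, and by Lemma \ref{dense} one may take $U=\{f\in H(G):\|f-p\|_K<\varepsilon\}$ with $p$ a polynomial vanishing at $0$, and $V=\prod_{\sigma=1}^{\sigma_0}\{g\in H(G):\|g-R_\sigma\|_K<\varepsilon\}$ with $R_\sigma(z)=\sum_{k=1}^{q}b_k^{(\sigma)}z^{-k}$, over a compact $K\subset G$ with connected complement. First I would record that the hypotheses give $\lim_n\lambda_n^{(\sigma)}=+\infty$ for every $\sigma$ and, multiplying consecutive ratios, $\lim_n\lambda_n^{(\tau)}/\lambda_n^{(\tau')}=+\infty$ for $\tau>\tau'$; so Lemmas \ref{l1} and \ref{l2} apply to every pair of indices, giving $d_{k,\lambda_n^{(\sigma)}}\to+\infty$ and $d_{k,\lambda_n^{(\tau')}}/d_{k,\lambda_n^{(\tau)}}\to 0$ whenever $\tau'<\tau$, for every $k$.

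Then I would build, block by block, the piece of $f$ responsible for the $\sigma$-th coordinate. Put $R_{\sigma,n}(z)=\sum_{k=1}^{q}\frac{b_k^{(\sigma)}}{d_{k,\lambda_n^{(\sigma)}}}z^{-k}$; by Lemma \ref{z-k}, $\widetilde{T}_{\lambda_n^{(\sigma)}}(R_{\sigma,n})=R_\sigma$ and, for any index $\tau$, $\widetilde{T}_{\lambda_n^{(\tau)}}(R_{\sigma,n})=\sum_{k=1}^{q}\frac{b_k^{(\sigma)}}{d_{k,\lambda_n^{(\sigma)}}}d_{k,\lambda_n^{(\tau)}}\psi_k$. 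For the fastest index $\sigma_0$ this $R_{\sigma_0,n}$ is already the right block: it is small on $K$, it is sent exactly to $R_{\sigma_0}$ by $\widetilde{T}_{\lambda_n^{(\sigma_0)}}$, and by every slower $\widetilde{T}_{\lambda_n^{(\tau)}}$ ($\tau<\sigma_0$) its image tends to $0$ on $K$ since $d_{k,\lambda_n^{(\tau)}}/d_{k,\lambda_n^{(\sigma_0)}}\to 0$. For $\sigma<\sigma_0$ the obstruction is exactly the one met in Theorem \ref{mult}: $\widetilde{T}_{\lambda_n^{(\tau)}}(R_{\sigma,n})$ blows up for $\tau>\sigma$. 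The cure is again the Bernstein--Walsh construction of \cite{Ransford} with the same fixed contour $\Gamma\subset G\setminus K$ and the same constants $\theta\in(0,1)$, $d=d(\Gamma,K)$, $M=\max_{z\in K}|z|$ (depending only on $K$ and $G$): replace $R_{\sigma,n}$ by a polynomial $g_{\sigma,n}=zp_{\sigma,n}$ of degree $\le\lambda_n^{(\sigma+1)}$, where $p_{\sigma,n}$ is given by the contour-integral formula of that proof but with the Fekete polynomial $q_{\lambda_n^{(\sigma+1)}}$. One then verifies, exactly as before, that $\|g_{\sigma,n}\|_K\to 0$; that $\widetilde{T}_{\lambda_n^{(\tau)}}(g_{\sigma,n})=g_{\sigma,n}(0)=0$ for $\tau\ge\sigma+1$ and $n$ large, by Lemma \ref{poly} and the degree bound $\deg g_{\sigma,n}\le\lambda_n^{(\sigma+1)}\le\lambda_n^{(\tau)}$; and that $\|\widetilde{T}_{\lambda_n^{(\sigma)}}(g_{\sigma,n})-R_\sigma\|_K\to 0$, via Lemma \ref{zf} and the Bernstein--Walsh bound, the decisive estimate being $(M/d)^{\lambda_n^{(\sigma)}}\theta^{\lambda_n^{(\sigma+1)}}=\bigl((M/d)^{\lambda_n^{(\sigma)}/\lambda_n^{(\sigma+1)}}\theta\bigr)^{\lambda_n^{(\sigma+1)}}\to 0$ because $\lambda_n^{(\sigma)}/\lambda_n^{(\sigma+1)}\to 0$ and $\theta<1$.

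The step I expect to be the main obstacle is the only one absent from the two-sequence proof: controlling a \emph{slower} operator applied to an \emph{intermediate} block, i.e.\ $\widetilde{T}_{\lambda_n^{(\tau)}}(g_{\sigma,n})$ with $\tau<\sigma<\sigma_0$ (this never occurs when $\sigma_0=2$, yet it is needed to handle the lower-index coordinates of $D_n$). I would dispose of it by the same splitting used in Theorem \ref{mult}: $\widetilde{T}_{\lambda_n^{(\tau)}}(g_{\sigma,n})=\widetilde{T}_{\lambda_n^{(\tau)}}(R_{\sigma,n})+\widetilde{T}_{\lambda_n^{(\tau)}}\bigl(z(p_{\sigma,n}-R_{\sigma,n}/z)\bigr)$. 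The first term equals $\sum_{k=1}^{q}\frac{b_k^{(\sigma)}}{d_{k,\lambda_n^{(\sigma)}}}d_{k,\lambda_n^{(\tau)}}\psi_k$, whose $K$-norm $\to 0$ since $\tau<\sigma$; the second, by Lemma \ref{zf} and Bernstein--Walsh, is at most a constant times $(M/d)^{\lambda_n^{(\tau)}}\theta^{\lambda_n^{(\sigma+1)}}\to 0$, now because $\lambda_n^{(\tau)}/\lambda_n^{(\sigma+1)}\to 0$ and $\theta<1$.

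Finally I would set $f=p+\sum_{\sigma=1}^{\sigma_0-1}g_{\sigma,n_0}+R_{\sigma_0,n_0}$ for $n_0$ large. Since $\widetilde{T}_{\lambda_{n_0}^{(\sigma)}}(p)=p(0)=0$ for all $\sigma$ and $n_0$ large (Lemma \ref{poly}), the membership $f\in U$ follows at once from the smallness on $K$ of each $g_{\sigma,n_0}$ and of $R_{\sigma_0,n_0}$; and in the $\sigma$-th coordinate of $D_{n_0}(f)$ every summand other than the one tuned to $\sigma$ (namely $g_{\sigma,n_0}$ if $\sigma<\sigma_0$, or $R_{\sigma_0,n_0}$ if $\sigma=\sigma_0$) is as small on $K$ as we wish -- this uses Lemma \ref{poly} for the blocks of lower index and the estimates above for those of higher index -- while that one term lies within $\varepsilon$ of $R_\sigma$. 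As $\sigma_0$ is finite, a single $n_0$ makes all these finitely many estimates hold, so $D_{n_0}(f)\in V$ and transitivity is proved.
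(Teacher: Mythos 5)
Your proposal is correct and follows exactly the route the paper intends: the paper proves this theorem only by remarking that ``all the steps of the previous proof apply'', and your argument is a faithful execution of that extension, correctly isolating the one case genuinely absent when $\sigma_0=2$ (a slower operator $\widetilde{T}_{\lambda_n^{(\tau)}}$ applied to an intermediate block $g_{\sigma,n}$ with $\tau<\sigma<\sigma_0$) and disposing of it with the same splitting into $\widetilde{T}_{\lambda_n^{(\tau)}}(R_{\sigma,n})$ plus the Bernstein--Walsh remainder. All estimates check out, so your write-up actually supplies the detail the paper leaves implicit.
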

As it turns out, Theorem \ref{mult2} gives a sufficient condition so that disjoint universality occurs. \\
\textbf{Open Question:} Are the conditions $\di\lim_n\lambda_n^{(1)}=+\infty$ and $\di\lim_n\dfrac{\lambda_n^{(\sigma+1)}}{\lambda_n^{(\sigma)}}=+\infty,$ $ \ \sigma=1,2,\ldots,\sigma_0-1$ necessary in order to obtain disjoint universality?    
\section{Disjoint Universality for $T_{a,n}$}
${}$

In this section, we are going to deal with the same problem considering the more general sequence of operators $(T_{a,n})_n$. Let us recall that if $G\subset\cc$ is a simply connected domain, we denote by   $T_{a,n}: H(G)\to H(G), \ n\in\nn$\ $a\in\cc\smallsetminus\{-1\}$:
    $$T_{a,n}(f)(\zeta)=\sum_{j=0}^{n}\dfrac{f^{(j)}(\zeta)}{j!}(a\zeta)^j, \ \zeta\in G.$$
Our result is based on a result concerning disjoint universality for universal Taylor series presented in \cite{vlachou1}. Let us start be giving some notation and a definition.

 For a compact set $K\subset\cc$, we denote
$$\mathcal{A}(K)=\{g\in C(K): \ g \text{ is
holomorphic in } K^o\}.$$
We consider the space $\mathcal{A}(K)$ with the topology of uniform convergence on $K$.\\
Moreover, 
 $$\mathcal{M}=\{K\st
\cc: K \ \text{compact set with } \ K^c \text{ connected
set}\}$$
and
 $$\mathcal{M}_{G^c}=\{K\st
G^c: K \ \text{compact set with } \ K^c \text{ connected
set}\}.$$
Finally, if $f\in H(G)$ and $\zeta\in G$, then $S_n(f,\zeta)(z)=\sum_{j=0}^{n}\dfrac{f^{j}(z)}{j!}(z-\zeta)^j,$  $ \ n=1,2,\ldots.$
\begin{definition} Let $(\lambda^{(\sigma)}_n)_n$ , $\sigma=1,2,\ldots\sigma_0$ be a finite collection of sequences of positive integers. A function $f\in H(G)$ belongs to the class 
$U_{mult}(G, [(\lambda^{(\sigma)}_n)_n]_{\sigma=1}^{\sigma_0})$, if for every choice of compact sets $K_1,K_2,\ldots, K_{\sigma_0}\in\mathcal{M}_{G^c}$ and every choice of functions $g_1\in \mathcal{A}(K_1) ,\ldots, g_{\sigma_0}\in \mathcal{A}(K_{\sigma_0})$  there exists a striclty increasing sequence of positive integers $(\mu_n)_n$ such that for every compact set $\Gamma\st \Omega$:
$$\sup_{\zeta\in\Gamma}||S_{\lambda^{(\sigma)}_{\mu_n}}(f, \zeta)-g_{\sigma}||_{K_{\sigma}}\xrightarrow{n\to+\infty}0, \ \sigma=1,2,\ldots,\sigma_0.$$
\end{definition}
We are now ready to give our result. We follow the ideas  presented in the proof of Theorem 2.5. in \cite{bcms}.
\begin{theorem}\label{tan} Let $G\subset \cc$ be a simply connected domain, $\sigma_0\in\nn$ and $a\in\cc\smallsetminus \{-1\}$. If $G\cap (a+1)G=\emptyset$,  then the sequence of linear operators $D_{n,a} H(G)\to [H(G)]^{\sigma_0}$, with
 $D_{n,a}(f)=(T_{a,n}(f),T_{a,n^2}(f),\ldots ,T_{a,n^{\sigma_0}}(f))$ is universal. 
\end{theorem}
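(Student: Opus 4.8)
The plan is to reduce the statement to the existence of a multiply-universal Taylor series, which is available from \cite{vlachou1}, via the elementary identity that links the operators $T_{a,n}$ to partial sums of Taylor expansions. Writing $S_m(f,\zeta)(z)=\sum_{j=0}^{m}\frac{f^{(j)}(\zeta)}{j!}(z-\zeta)^j$ and substituting $z=(a+1)\zeta$, so that $z-\zeta=a\zeta$, one obtains
$$T_{a,m}(f)(\zeta)=S_m(f,\zeta)\bigl((a+1)\zeta\bigr),\qquad \zeta\in G,\ m\in\nn.$$
Thus $T_{a,m}(f)$ is the $m$-th Taylor partial sum of $f$ centred at $\zeta$, evaluated at the moving point $(a+1)\zeta$. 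Since $a\ne-1$ the map $w\mapsto(a+1)w$ is a homeomorphism of $\cc$ fixing $\infty$, and the hypothesis $G\cap(a+1)G=\emptyset$ forces $(a+1)\zeta\in G^c$ whenever $\zeta\in G$; this is precisely the regime in which the universal Taylor series machinery with varying centre applies.

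First I would observe that the sequences $\lambda^{(\sigma)}_n:=n^{\sigma}$, $\sigma=1,\dots,\sigma_0$, satisfy $\lim_n\lambda^{(1)}_n=+\infty$ and, for $\sigma=1,\dots,\sigma_0-1$, $\lambda^{(\sigma+1)}_n/\lambda^{(\sigma)}_n=n\to+\infty$, so by the multiple-universality theorem of \cite{vlachou1} the class $U_{mult}(G,[(\lambda^{(\sigma)}_n)_n]_{\sigma=1}^{\sigma_0})$ is non-empty (indeed residual in $H(G)$). Fix $f_0$ in this class; the claim is that $f_0$ is universal for $(D_{n,a})_n$. To prove it, let $h_1,\dots,h_{\sigma_0}\in H(G)$, let $L\subset G$ be compact, and let $\varepsilon>0$. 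Since $G$ is simply connected we may assume $L^c$ is connected (replace $L$ by its polynomial hull, which again lies in $G$). Put $K:=(a+1)L$. Then $K$ is compact, $K^c=(a+1)L^c$ is connected, and $K\subset(a+1)G\subset G^c$ by hypothesis, so $K\in\mathcal{M}_{G^c}$. Define $g_\sigma\in\mathcal{A}(K)$ by $g_\sigma(w):=h_\sigma\bigl(w/(a+1)\bigr)$, which is holomorphic on the open neighbourhood $(a+1)G$ of $K$.

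Next I would apply the definition of $U_{mult}$ with $K_1=\dots=K_{\sigma_0}=K$ and the targets $g_1,\dots,g_{\sigma_0}$: there exists a strictly increasing sequence $(\mu_n)_n$ such that, on taking the compact set $\Gamma=L$,
$$\sup_{\zeta\in L}\bigl\|S_{\mu_n^{\sigma}}(f_0,\zeta)-g_\sigma\bigr\|_K\xrightarrow{n\to+\infty}0,\qquad \sigma=1,\dots,\sigma_0.$$
For $\zeta\in L$ one has $(a+1)\zeta\in K$ and $g_\sigma((a+1)\zeta)=h_\sigma(\zeta)$, so the identity above gives $|T_{a,\mu_n^{\sigma}}(f_0)(\zeta)-h_\sigma(\zeta)|\le\|S_{\mu_n^{\sigma}}(f_0,\zeta)-g_\sigma\|_K$; taking the supremum over $\zeta\in L$ yields $\|T_{a,\mu_n^{\sigma}}(f_0)-h_\sigma\|_L\to0$ for each $\sigma$, hence $D_{\mu_n,a}(f_0)\to(h_1,\dots,h_{\sigma_0})$ in $[H(G)]^{\sigma_0}$. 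As $(h_\sigma)_\sigma$, $L$ and $\varepsilon$ were arbitrary, the orbit $\{D_{n,a}(f_0):n\in\nn\}$ is dense, and $(D_{n,a})_n$ is universal.

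I do not expect a serious obstacle: once the identity and the input from \cite{vlachou1} are in place the argument is essentially bookkeeping. The two points needing care are (i) that $K=(a+1)L$ really belongs to $\mathcal{M}_{G^c}$ — this uses $a\ne-1$, the disjointness hypothesis $G\cap(a+1)G=\emptyset$, and the reduction to a compact $L$ with connected complement, the last being legitimate precisely because $G$ is simply connected — and (ii) that the sequences $(n^{\sigma})_n$ fall under the hypotheses of the multiple-universality theorem of \cite{vlachou1}, which they do since consecutive ratios (and a fortiori gaps) tend to $+\infty$. The conceptual content is just the observation that the moving evaluation point $(a+1)\zeta$ converts universality of $(D_{n,a})_n$ into multiple universality of Taylor partial sums with varying centre.
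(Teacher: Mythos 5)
Your proposal is correct and follows essentially the same route as the paper: both rest on the identity $T_{a,m}(f)(\zeta)=S_m(f,\zeta)((a+1)\zeta)$, the observation that $(a+1)K\subset G^c$ under the hypothesis $G\cap(a+1)G=\emptyset$, and the non-emptiness of the class $U_{mult}(G,[(n^{\sigma})_n]_{\sigma=1}^{\sigma_0})$ from \cite{vlachou1}. The only cosmetic differences are that you approximate general $h_\sigma\in H(G)$ where the paper uses polynomials, and that you spell out the verification that $(a+1)L$ has connected complement, which the paper leaves implicit.
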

\begin{proof} In view of Corollary 3.2 and Theorem 2.2 in \cite{vlachou1}, the class of functions  $ U_{mult}(G, [(n^{\sigma})_n]_{\sigma=1}^{\sigma_0})$ is $G_{\delta}$ and dense subset of $H(G)$, thus non empty. Let $f\in  U_{mult}(G, [(n^{\sigma})_n]_{\sigma=1}^{\sigma_0})$. We will prove that $\{D_{n,a}(f): n\in\nn\}$ is dense in $[H(G)]^{\sigma_0}$ and the proof will be complete.

 Fix a compact set $K\subset G$  and a finite collection of polynomials $p_1,p_2,\ldots, p_{\sigma_0}$.
The condition $G\cap (a+1)G=\emptyset$ assures us that the compact set $\tilde{K}=(a+1)K$ lies outside $G$. Thus there exists a sequence $(n_k)_k$ of positive integers, such that:
$$\sup_{\zeta\in K}\sup_{z\in \tilde{K}}|S_{n_k^{\sigma}}(f,\zeta)(z)-p_{\sigma}(\frac{z}{a+1})|\xrightarrow{n\to+\infty} 0, \ \sigma=1,2,\ldots,\sigma_0.$$
Therefore,
$$\sup_{\zeta\in K}|S_{n_k^{\sigma}}(f,\zeta)((a+1)\zeta)-p_{\sigma}(\zeta)|\xrightarrow{n\to+\infty} 0, \ \sigma=1,2,\ldots,\sigma_0.$$
But,
$$S_{n_k^{\sigma}}(f,\zeta)((a+1)\zeta)=\sum_{j=0}^{n_k^{\sigma}}\frac{f^{j}(\zeta)}{j!}((a+1)\zeta-\zeta)^j=T_{a,n_k^{\sigma}}(f)(\zeta)$$
and the result follows.
\end{proof}
In the above result, we worked with a specific choice of sequences of indices $(\lambda_n^{(\sigma)}), \ \sigma=1,2,\ldots,\sigma_0$. This is due to the fact that there are no other known results concerning the class  $ U_{mult}(G, [(\lambda_n^{\sigma})_n]_{\sigma=1}^{\sigma_0})$. 
Question: Can we give more examples of sequences for which disjoint universality occurs? Can we characterize all sequences for which such a phenomenon occurs?

We can give some more examples, but we have no answer for the general case. To present our arguments, we give one more definition given in \cite{vlachou1}.
\begin{definition} Let $(\lambda^{(\sigma)}_n)_n$ , $\sigma=1,2,\ldots\sigma_0$ be a finite collection of sequences of positive integers and fix a point $\zeta\in G$. A function $f\in H(G)$ belongs to the class 
$U_{mult}(G, [(\lambda^{(\sigma)}_n)_n]_{\sigma=1}^{\sigma_0}, \zeta)$, if for every choice of compact sets $K_1,K_2,\ldots, K_{\sigma_0}\in\mathcal{M}_{G^c}$ and every choice of functions $g_1\in \mathcal{A}(K_1) ,\ldots, g_{\sigma_0}\in \mathcal{A}(K_{\sigma_0})$  there exists a strictly increasing sequence of positive integers $(\mu_n)_n$ such that:
$$||S_{\lambda^{(\sigma)}_{\mu_n}}(f, \zeta)-g_{\sigma}||_{K_{\sigma}}\xrightarrow{n\to+\infty}0, \ \sigma=1,2,\ldots,\sigma_0.$$
\end{definition}
Obviously the aforementioned class is weaker than $U_{mult}(G, [(\lambda^{(\sigma)}_n)_n]_{\sigma=1}^{\sigma_0})$, since $U_{mult}(G, [(\lambda^{(\sigma)}_n)_n]_{\sigma=1}^{\sigma_0})\subset U_{mult}(G, [(\lambda^{(\sigma)}_n)_n]_{\sigma=1}^{\sigma_0}, \zeta), \ \forall \zeta\in G$. In \cite{vlachou1}, the following theorem was proved (see Theorem 2.2 in \cite{vlachou1}).
\begin{theorem}\label{main} Let $G\varsubsetneq\cc$ be a simply connected domain and let $\zeta\in G$. The class  $U_{mult}(G, [(\lambda^{(\sigma)}_n)_n]_{\sigma=1}^{\sigma_0}, \zeta)$ is non-empty, if and only if, there exists a strictly increasing sequence of positive integers $(\mu_n)_n$ such that 
 $$\lim_{n\to\infty} \lambda^{(1)}_{\mu_n}=+\infty \text{ and } \lim_{n\to\infty}\frac{\lambda^{(\sigma+1)}_{\mu_n}}{\lambda^{(\sigma)}_{\mu_n}}=+\infty, \ \ \sigma=1,2,\ldots,\sigma_0-1.$$
 \end{theorem}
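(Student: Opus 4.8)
The plan is to establish both implications of the equivalence, the bulk of the work lying in the sufficiency direction ($\Leftarrow$). For ($\Leftarrow$) one may first reduce to the case in which the growth relations hold for the \emph{whole} sequences, i.e.\ $\di\lim_n\lambda^{(1)}_n=+\infty$ and $\di\lim_n\lambda^{(\sigma+1)}_n/\lambda^{(\sigma)}_n=+\infty$ for $\sigma=1,\dots,\sigma_0-1$: replacing each $(\lambda^{(\sigma)}_n)_n$ by $(\lambda^{(\sigma)}_{\mu_n})_n$ only shrinks the class, since a function universal for the subsequences is a fortiori universal for the original sequences (given target data, a sequence of indices that works for the subsequences, composed with $\mu$, becomes a strictly increasing sequence of indices that works for the original ones). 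Since $H(G)$ is a Fr\'echet, hence Baire, space, it then suffices to exhibit $U_{mult}(G,[(\lambda^{(\sigma)}_n)_n]_{\sigma=1}^{\sigma_0},\zeta)$ as a countable intersection of open dense sets, which forces it to be a dense $G_\delta$, in particular non-empty. A standard exhaustion of $G^c$ by compacta with connected complement, together with Mergelyan's theorem, reduces the target data to a countable family of tuples $(K_1,p_1),\dots,(K_{\sigma_0},p_{\sigma_0})$ with $K_\sigma\in\mathcal{M}_{G^c}$ and $p_\sigma$ polynomials, and to tolerances $1/s$, $s\in\nn$; for each such choice and each $N\in\nn$ the set $A=\{f\in H(G):\ \exists\,n\ge N\ \text{with}\ \|S_{\lambda^{(\sigma)}_n}(f,\zeta)-p_\sigma\|_{K_\sigma}<1/s,\ \sigma=1,\dots,\sigma_0\}$ is open and the class equals $\bigcap A$.

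The heart of the matter is the density of $A$. Fix $g\in H(G)$, a compact $L\subset G$ (which one may take with connected complement) and $\varepsilon>0$. Approximate $g$ on $L$ by a polynomial $P_0$ (Runge's theorem \cite{rudin}) and put $\lambda^{(0)}_n:=\deg P_0$. For large $n$ build correction polynomials $P_1,\dots,P_{\sigma_0}$, where $P_\sigma(z)=(z-\zeta)^{\lambda^{(\sigma-1)}_n+1}R_\sigma(z)$ with $\deg R_\sigma\le\lambda^{(\sigma)}_n-\lambda^{(\sigma-1)}_n-1$, so that the Taylor expansion of $P_\sigma$ at $\zeta$ is supported in degrees $\{\lambda^{(\sigma-1)}_n+1,\dots,\lambda^{(\sigma)}_n\}$ and hence $P_\sigma$ does not affect any partial sum of order $\le\lambda^{(\sigma-1)}_n$; choose $R_\sigma$ so that it approximates $(p_\sigma-P_0-\dots-P_{\sigma-1})/(z-\zeta)^{\lambda^{(\sigma-1)}_n+1}$ on $K_\sigma$ while $P_\sigma$ remains uniformly tiny on $L$ and on every $K_\tau$, $\tau\ne\sigma$. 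With these choices $S_{\lambda^{(\sigma)}_n}(P_0+\dots+P_{\sigma_0},\zeta)=P_0+\dots+P_\sigma$, so inductively $f:=P_0+\dots+P_{\sigma_0}$ has $\|S_{\lambda^{(\sigma)}_n}(f,\zeta)-p_\sigma\|_{K_\sigma}$ small for every $\sigma$ and $\|f-g\|_L<\varepsilon$, placing $f\in A$. The polynomials $R_\sigma$ are produced by the Fekete-polynomial/Bernstein--Walsh interpolation already exploited in Section~2 (cf.\ \cite{Ransford}): the function to be approximated is rational with its only pole at $\zeta\notin K_\sigma$, so its degree-$d$ polynomial approximants on $K_\sigma$, as well as their growth off $K_\sigma$, decay like $\theta^d$ for some $\theta\in(0,1)$. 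This is precisely where the hypothesis is used: the amplifying factors coming from dividing by $(z-\zeta)^{\lambda^{(\sigma-1)}_n+1}$ and from $\max_{z\in L}|z-\zeta|$ are of order $C^{\lambda^{(\sigma-1)}_n}$, while the approximation error and unwanted growth are of order $\theta^{\lambda^{(\sigma)}_n-\lambda^{(\sigma-1)}_n}$; since $\lambda^{(\sigma)}_n/\lambda^{(\sigma-1)}_n\to+\infty$, the product $\theta^{\lambda^{(\sigma)}_n-\lambda^{(\sigma-1)}_n}C^{\lambda^{(\sigma-1)}_n}\to0$. Carrying out this balancing simultaneously for all $\sigma$, for $L$ and for all the $K_\tau$, and keeping track of the accumulated errors, is the main technical obstacle.

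For the necessity ($\Rightarrow$), suppose $f\in U_{mult}(G,[(\lambda^{(\sigma)}_n)_n]_{\sigma=1}^{\sigma_0},\zeta)$ and test it against a single non-trivial compact $K\in\mathcal{M}_{G^c}$ taken as $K_1=\dots=K_{\sigma_0}=K$, with targets $g_\sigma=h+(\sigma-1)$, where $h\in\mathcal{A}(K)$ is not a polynomial. Let $(\mu_n)$ be a strictly increasing sequence with $\|S_{\lambda^{(\sigma)}_{\mu_n}}(f,\zeta)-g_\sigma\|_K\to0$ for all $\sigma$. From $\sigma=1$ we get $S_{\lambda^{(1)}_{\mu_n}}(f,\zeta)\to h$ uniformly on $K$; since $h$ is not a polynomial, no subsequence of $(\lambda^{(1)}_{\mu_n})$ can be bounded (otherwise these partial sums would lie in a fixed finite-dimensional polynomial space), so $\lambda^{(1)}_{\mu_n}\to+\infty$. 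For the ratio, note that $S_{\lambda^{(\sigma+1)}_{\mu_n}}(f,\zeta)-S_{\lambda^{(\sigma)}_{\mu_n}}(f,\zeta)\to1$ uniformly on $K$; this difference vanishes to order $>\lambda^{(\sigma)}_{\mu_n}$ at $\zeta$, has degree $\le\lambda^{(\sigma+1)}_{\mu_n}$, and has Taylor coefficients at $\zeta$ bounded via Cauchy's estimates, and a Bernstein--Walsh type lower bound shows a polynomial of this kind cannot stay close to a non-zero constant on $K$ unless $\lambda^{(\sigma+1)}_{\mu_n}/\lambda^{(\sigma)}_{\mu_n}\to+\infty$. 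I expect this last estimate to be the subtle point of the necessity direction: a crude Bernstein--Walsh bound anchored only at the centre $\zeta$ does not close it, because the compacta of $G^c$ lie at a positive distance from $\zeta$, which works against such an estimate, so one has to exploit the location of $K$ inside $G^c$ more carefully.
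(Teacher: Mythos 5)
First, a point of order: the paper does not actually prove Theorem \ref{main} --- it is quoted as Theorem 2.2 of \cite{vlachou1} and used as a black box --- so there is no in-paper proof to compare yours against, and your attempt has to be judged on its own merits. Your sufficiency half follows the standard route from that literature (Baire category, reduction to a countable family of targets via Mergelyan and an exhaustion of $\mathcal{M}_{G^c}$, then density of the open sets $A$ by a block construction whose Taylor supports at $\zeta$ are confined to $(\lambda^{(\sigma-1)}_n,\lambda^{(\sigma)}_n]$, with the Bernstein--Walsh rate $\theta^{\lambda^{(\sigma)}_n}$ beating the amplification $C^{\lambda^{(\sigma-1)}_n}$). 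That outline is essentially right, including the reduction to subsequences. One step is stated too casually: you cannot literally approximate $(p_\sigma-P_0-\cdots-P_{\sigma-1})/(z-\zeta)^{\lambda^{(\sigma-1)}_n+1}$ on $L$ as well as on $K_\sigma$, because $\zeta$ may lie in $L$ and the quotient has a pole there. The standard repair is to first produce a Mergelyan/Runge approximant $Q$ of the data on $K_\sigma\cup L\cup\bigcup_{\tau\ne\sigma}K_\tau$ of controlled degree and then subtract its $\lambda^{(\sigma-1)}_n$-th Taylor section at $\zeta$, estimating that section by Cauchy's inequalities on a small disc about $\zeta$; this is exactly where the factor $C^{\lambda^{(\sigma-1)}_n}$ you identify comes from. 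That is a repairable presentational issue, not a conceptual one.

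The genuine gap is in the necessity direction, and the lemma you lean on there is false as stated. For a \emph{fixed} $K\in\mathcal{M}_{G^c}$, a polynomial sequence $p_n=(z-\zeta)^{M_n}r_n$ with $\deg p_n\le N_n$ \emph{can} converge uniformly to $1$ on $K$ with $N_n/M_n$ bounded: apply Bernstein--Walsh to the function $(z-\zeta)^{-M_n}$, which is holomorphic on the Green level domain $\{g_K<g_K(\zeta)\}$, to get $r_n$ of degree $N_n-M_n$ with $\|r_n-(z-\zeta)^{-M_n}\|_K\lesssim\delta^{-M_n}e^{-\gamma'(N_n-M_n)}$; multiplying back by $(z-\zeta)^{M_n}$ shows the error tends to $0$ as soon as $N_n\ge cM_n$ for a finite constant $c=c(K,\zeta)$. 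So testing against a single pair $(K,h)$ can only force $\liminf\lambda^{(\sigma+1)}_{\mu_n}/\lambda^{(\sigma)}_{\mu_n}\ge c(K,\zeta)$, never the divergence of the ratio. The actual proof must test against a family of compacta $K^{(m)}\subset G^c$ (and targets) chosen so that the forced lower bound degenerates to $+\infty$, establish a quantitative potential-theoretic lower bound for each $K^{(m)}$ --- which is a different and harder estimate than the upper bound above --- and then extract a single sequence $(\mu_n)$ by diagonalisation. You correctly sense that ``a crude Bernstein--Walsh bound anchored only at the centre does not close it,'' but what is missing is not a refinement of the single-$K$ estimate: it is the whole varying-compacta-plus-diagonalisation architecture, which is the real content of the necessity half in \cite{vlachou1} and in the Costakis--Tsirivas argument \cite{ct} it is modelled on.
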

We will use the aforementioned theorem, to prove the following result which is connected to the case of sequences of indices with polyonimic growth at infinity.
\begin{theorem} Let $(\lambda_n^{(\sigma)})_n$, $\sigma=1,2,\ldots, \sigma_0$ be a finite number of sequences of positive integers such that $\lim_{n}\frac{\lambda_n^{(\sigma)}}{n^{d_{\sigma}}}\in (0,+\infty)$, for $0<d_1<d_2<\ldots<d_{\sigma_0}$, then $U_{mult}(G, [(\lambda_n^{(\sigma)})_n]_{\sigma=1}^{\sigma_0})\ne\emptyset$. 
\end{theorem}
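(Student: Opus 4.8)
The plan is to extract the growth conditions of Theorem~\ref{main} from the given asymptotics, apply that theorem, and then upgrade the resulting pointwise statement to the uniform one exactly as in the proof of Theorem~\ref{tan}. We may assume $G\varsubsetneq\cc$ (otherwise $\mathcal{M}_{G^c}=\{\emptyset\}$ and the class is all of $H(G)$, so there is nothing to prove). Write $c_\sigma=\di\lim_n\lambda_n^{(\sigma)}/n^{d_\sigma}\in(0,+\infty)$, so that $\lambda_n^{(\sigma)}=c_\sigma n^{d_\sigma}(1+o(1))$ as $n\to+\infty$. Since $d_1>0$ and $c_1>0$ we get $\lambda_n^{(1)}\to+\infty$, and since $d_{\sigma+1}-d_\sigma>0$ while $c_{\sigma+1},c_\sigma\in(0,+\infty)$,
$$\frac{\lambda_n^{(\sigma+1)}}{\lambda_n^{(\sigma)}}=\frac{c_{\sigma+1}}{c_\sigma}\,n^{d_{\sigma+1}-d_\sigma}\,(1+o(1))\xrightarrow{n\to+\infty}+\infty,\qquad \sigma=1,2,\ldots,\sigma_0-1.$$
Hence the strictly increasing sequence $\mu_n=n$ satisfies $\di\lim_n\lambda_{\mu_n}^{(1)}=+\infty$ and $\di\lim_n\lambda_{\mu_n}^{(\sigma+1)}/\lambda_{\mu_n}^{(\sigma)}=+\infty$ for each $\sigma$.

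By Theorem~\ref{main}, applied with this $(\mu_n)_n$, the class $U_{mult}(G,[(\lambda_n^{(\sigma)})_n]_{\sigma=1}^{\sigma_0},\zeta)$ is non-empty for every $\zeta\in G$. Then, exactly as in the proof of Theorem~\ref{tan}, I would invoke Corollary~3.2 of \cite{vlachou1}: $U_{mult}(G,[(\lambda_n^{(\sigma)})_n]_{\sigma=1}^{\sigma_0})$ is $G_\delta$ in $H(G)$, and its density (in particular its non-emptiness) follows from the non-emptiness of the pointwise classes established above. This yields $U_{mult}(G,[(\lambda_n^{(\sigma)})_n]_{\sigma=1}^{\sigma_0})\ne\emptyset$.

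The one non-routine point is this final passage from a fixed center $\zeta$ to a single sequence of indices that achieves the approximation uniformly for $\zeta$ in compact subsets of $G$; the asymptotic computation above is exactly the input that makes the Baire-category machinery of \cite{vlachou1} run. If one prefers to argue directly: for fixed $K_\sigma\in\mathcal{M}_{G^c}$, $g_\sigma\in\mathcal{A}(K_\sigma)$, compact $\Gamma\subset G$ and $m\in\nn$, the map $f\mapsto\max_\sigma\sup_{\zeta\in\Gamma}\|S_{\lambda_m^{(\sigma)}}(f,\zeta)-g_\sigma\|_{K_\sigma}$ is continuous on $H(G)$, so the class is a countable intersection of open sets; its density is then obtained by combining the single-center realization coming from Theorem~\ref{main} with Runge/Mergelyan approximation on the sets $K_\sigma$, letting $\zeta$ range over an exhaustion of $G$ by compacta.
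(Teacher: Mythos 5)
Your verification of the hypotheses of Theorem~\ref{main} (with $\mu_n=n$) is correct, and it does give $U_{mult}(G,[(\lambda_n^{(\sigma)})_n]_{\sigma=1}^{\sigma_0},\zeta)\ne\emptyset$ for every $\zeta\in G$. But the step you flag as ``the one non-routine point'' is in fact where the entire content of the paper's proof lives, and your two proposed ways around it do not work. The passage from the pointwise class $U_{mult}(\cdot,\zeta)$ to the uniform class $U_{mult}(\cdot)$ is the center-independence problem for restricted sequences of partial-sum indices, and it is \emph{not} driven by the ratios $\lambda_n^{(\sigma+1)}/\lambda_n^{(\sigma)}\to+\infty$ that you computed. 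The paper proves the equality $U_{mult}(G,[(\lambda_n^{(\sigma)})_n])=U_{mult}(G,[(\lambda_n^{(\sigma)})_n],\zeta)$ by verifying the hypotheses of Theorem~2.1 in \cite{vlachou2}: for every strictly increasing $(n_k)_k$ one must produce indices $p_k\le q_k$, with $(q_k)_k$ a subsequence of $(n_k)_k$, satisfying simultaneously
$\lambda^{(\sigma)}_{p_{k+1}}/\lambda^{(\sigma)}_{q_k}\to+\infty$, $\lambda^{(\sigma)}_{q_k}/\lambda^{(\sigma)}_{p_k}\to+\infty$, \emph{and} the upper bound $\lambda^{(\sigma)}_{q_k}/\lambda^{(\sigma)}_{p_k}\le k$ for large $k$. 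These are growth conditions on each single sequence $(\lambda^{(\sigma)}_n)_n$ along carefully chosen index subsequences (the paper takes $q_{k+1}>kq_k$ and $p_k=[q_k/(\log k)^{1/d_{\sigma_0}}]+1$, and the delicate point is the two-sided control of $\lambda^{(\sigma)}_{q_k}/\lambda^{(\sigma)}_{p_k}$); they are of a completely different nature from the cross-ratios between different $\sigma$'s, and nothing in your argument supplies them.

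Your fallback ``direct'' argument also does not close the gap: the $G_\delta$ part is routine, but density requires, for a fixed index $m$, control of $\sup_{\zeta\in\Gamma}\|S_{\lambda_m^{(\sigma)}}(f,\zeta)-g_\sigma\|_{K_\sigma}$ \emph{uniformly} in the center $\zeta$ over a compact $\Gamma\subset G$, and ``letting $\zeta$ range over an exhaustion'' plus Runge/Mergelyan at each fixed center does not produce a single $f$ and a single $m$ achieving this uniformity. (Note also that Corollary~3.2 of \cite{vlachou1}, as used in Theorem~\ref{tan}, is stated for the specific sequences $n^\sigma$; for general $(\lambda_n^{(\sigma)})_n$ the paper explicitly has to rerun the argument of \cite{vlachou2}.) So the proposal establishes only the non-emptiness of the weaker pointwise classes and is missing the construction of $(p_k)_k$ and $(q_k)_k$ that the theorem actually requires.
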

\begin{proof}
Let $(n_k)_k$ be a strictly increasing sequence of positive integers. We choose a subsequence $(q_k)_k$ of $(n_k)_k$ such that:
$$q_{k+1}>kq_k, \ \forall k\in\nn.$$
For $k$ large enough we set:
$$p_k=\biggl[\dfrac{q_k}{(\log k)^{\frac{1}{d_{\sigma_0}}}}\biggl]+1.$$
Then:
$$\dfrac{q_k}{(\log k)^{\frac{1}{d_{\sigma_0}}}}\leq p_k\leq \dfrac{q_k}{(\log k)^{\frac{1}{d_{\sigma_0}}}}+1\Rightarrow$$
$$\biggl(\dfrac{1}{(\log k)^{\frac{1}{d_{\sigma_0}}}}+\frac{1}{q_k}\biggl)^{-1} \leq\dfrac{q_k}{p_k}\leq(\log k)^{\frac{1}{d_{\sigma_0}}}$$
and
$$p_{k+1}=\biggl[\dfrac{q_{k+1}}{\log{( k+1})^{\frac{1}{d_{\sigma_0}}}}\biggl]+1\geq \dfrac{q_{k+1}}{\log{( k+1})^{\frac{1}{d_{\sigma_0}}}}\geq \dfrac{2q_{k+1}}{k}>2q_k.$$
Thus, for every $\sigma=1,2,\ldots,\sigma_0$:
\begin{align*}
\lim_k\dfrac{\lambda_{p_{k+1}}^{(\sigma)}}{\lambda_{q_k}^{(\sigma)}}=\lim_k  \dfrac{p_{k+1}^{d_{\sigma}}}{q_k^{d_{\sigma}}}=+\infty\\
\lim_k \dfrac{\lambda_{q_k}^{(\sigma)}}{\lambda_{p_k}^{(\sigma)}}=\lim_k \dfrac{q_k^{d_{\sigma}}}{p_k^{d_{\sigma}}}=+\infty \\
\end{align*}
$$
\dfrac{\lambda_{q_k}^{(\sigma)}}{\lambda_{p_k}^{(\sigma)}}=\dfrac{\frac{\lambda_{q_k}^{(\sigma)}}{q_k^{d_{\sigma}}}}{\frac{\lambda_{p_k}^{(\sigma)}}{p_k^{d_{\sigma}}}}\biggl(\dfrac{q_k}{p_k}\biggl)^{d_{\sigma}}\\
\leq 2 \biggl(\dfrac{q_k}{p_k}\biggl)^{d_{\sigma_0}}\leq k, \ k \ \ \text{large enough}.$$

Following the   proof of Theorem 2.1 in \cite{vlachou2} and Corollary 3.2 in \cite{vlachou1}, we are lead to the conclusion that  $U_{mult}(G, [(\lambda_n^{(\sigma)})_n]_{\sigma=1}^{\sigma_0})=U_{mult}(G, [(\lambda_n^{(\sigma)})_n]_{\sigma=1}^{\sigma_0}, \zeta)$, for every $\zeta\in G$. But Theorem \ref{main} implies that  $U_{mult}(G, [(\lambda_n^{(\sigma)})_n]_{\sigma=1}^{\sigma_0}, \zeta)\ne\emptyset$, for every $\zeta\in G$ and  the result follows.
\end{proof}
In view of the above result, following the proof of Theorem \ref{tan} we are lead to:
\begin{corollary}Let $G\subset \cc$ be a simply connected domain, $\sigma_0\in\nn$ and $a\in\cc\smallsetminus \{-1\}$. Let, in addition,  $(\lambda_n^{(\sigma)})_n$, $\sigma=1,2,\ldots, \sigma_0$ be a finite number of sequences of positive integers such that $\lim_{n}\frac{\lambda_n^{(\sigma)}}{n^{d_{\sigma}}}\in (0,+\infty)$, for $0<d_1<d_2<\ldots<d_{\sigma_0}$. If $G\cap (a+1)G=\emptyset$,   then the sequence of linear operators $D_{n,a} H(G)\to [H(G)]^{\sigma_0}$, with
 $D_{n,a}(f)=(T_{a,\lambda_n^{(1)}}(f),T_{a,\lambda_n^{(2)}}(f),\ldots ,T_{a,\lambda_n^{(\sigma_0)}}(f))$ is universal.  
\end{corollary}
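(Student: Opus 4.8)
The plan is to imitate the proof of Theorem~\ref{tan} almost verbatim, with the single difference that the non-emptiness of the relevant universality class is now supplied by the theorem just established (polynomial growth case) instead of Corollary~3.2 and Theorem~2.2 of \cite{vlachou1}. So first I would invoke that preceding theorem to get a function
$$f\in U_{mult}\bigl(G, [(\lambda_n^{(\sigma)})_n]_{\sigma=1}^{\sigma_0}\bigr),$$
which is non-empty precisely because $0<d_1<\cdots<d_{\sigma_0}$. Since ``universal'' only asks for the existence of one vector with dense orbit, it then suffices to prove that $\{D_{n,a}(f):n\in\nn\}$ is dense in $[H(G)]^{\sigma_0}$.

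For the density I would fix a basic neighborhood, i.e.\ a compact set $K\subset G$, polynomials $p_1,\dots,p_{\sigma_0}$ and $\varepsilon>0$; as polynomials are dense in $H(G)$ (Runge, $G$ simply connected) it is enough to approximate the tuple $(p_1,\dots,p_{\sigma_0})$ uniformly on $K$. Enlarging $K$ to its polynomial hull inside $G$ (the polynomial hull of a compact subset of a simply connected domain again lies in that domain), I may assume $K\in\mathcal{M}$. The hypothesis $G\cap(a+1)G=\emptyset$ and $K\subset G$ give $\tilde K:=(a+1)K\subset(a+1)G\subset\cc\sm G=G^c$, and since $a+1\ne0$ the affine map $z\mapsto(a+1)z$ is a homeomorphism of $\cc$, so $\tilde K$ is compact with connected complement; hence $\tilde K\in\mathcal{M}_{G^c}$. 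I would then apply the definition of $U_{mult}\bigl(G, [(\lambda_n^{(\sigma)})_n]_{\sigma=1}^{\sigma_0}\bigr)$ with $K_1=\cdots=K_{\sigma_0}=\tilde K$ and $g_\sigma(z)=p_\sigma\!\left(\frac{z}{a+1}\right)\in\mathcal{A}(\tilde K)$, obtaining a strictly increasing sequence $(\mu_n)_n$ with
$$\sup_{\zeta\in K}\Bigl\|S_{\lambda^{(\sigma)}_{\mu_n}}(f,\zeta)-g_\sigma\Bigr\|_{\tilde K}\xrightarrow{\,n\to+\infty\,}0,\qquad \sigma=1,\dots,\sigma_0,$$
where I take the compact set $\Gamma=K$ in that definition.

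The final step is the algebraic identity already used in Theorem~\ref{tan}: for $\zeta\in K$ the point $(a+1)\zeta$ lies in $\tilde K$, and using $(a+1)\zeta-\zeta=a\zeta$,
$$S_{\lambda^{(\sigma)}_{\mu_n}}(f,\zeta)\bigl((a+1)\zeta\bigr)=\sum_{j=0}^{\lambda^{(\sigma)}_{\mu_n}}\frac{f^{(j)}(\zeta)}{j!}(a\zeta)^j=T_{a,\lambda^{(\sigma)}_{\mu_n}}(f)(\zeta),$$
while $g_\sigma\bigl((a+1)\zeta\bigr)=p_\sigma(\zeta)$. Since $\zeta\mapsto(a+1)\zeta$ maps $K$ onto $\tilde K$, the displayed convergence forces $\sup_{\zeta\in K}\bigl|T_{a,\lambda^{(\sigma)}_{\mu_n}}(f)(\zeta)-p_\sigma(\zeta)\bigr|\to0$ for each $\sigma$, that is, $D_{\mu_n,a}(f)\to(p_1,\dots,p_{\sigma_0})$ in $[H(G)]^{\sigma_0}$, which is exactly the required density.

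There is no real obstacle here beyond bookkeeping: all the analytic content sits in the preceding theorem (itself resting on Theorem~\ref{main}). The only points deserving a line of care are that $\tilde K$ truly belongs to $\mathcal{M}_{G^c}$ --- connectedness of the complement being preserved by the affine homeomorphism $z\mapsto(a+1)z$, which is legitimate exactly because $a\ne-1$ --- and the reduction to $K\in\mathcal{M}$ via the polynomial hull inside $G$.
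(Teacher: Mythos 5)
Your proposal is correct and follows exactly the route the paper intends: the paper gives no separate proof of this corollary, stating only that it follows from the preceding theorem (non-emptiness of $U_{mult}(G,[(\lambda_n^{(\sigma)})_n]_{\sigma=1}^{\sigma_0})$ in the polynomial-growth case) combined with the argument of Theorem \ref{tan}, which is precisely what you carry out. Your added remarks on $\tilde{K}\in\mathcal{M}_{G^c}$ and the reduction to $K\in\mathcal{M}$ are correct bookkeeping that the paper leaves implicit.
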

Finally, we present one more special case which deals with sequences of indices with exponential growth at $\infty$.
\begin{theorem}Let $(\lambda_n^{(\sigma)})_n$, $\sigma=1,2,\ldots, \sigma_0$ be a finite number of sequences of positive integers such that $\lim_{n}\frac{\lambda_n^{(\sigma)}}{a_{\sigma}^{n}}\in (0,+\infty)$, for $1<a_1<a_2<\ldots<a_{\sigma_0}$, then $U_{mult}(G, [(\lambda_n^{(\sigma)})_n]_{\sigma=1}^{\sigma_0})\ne\emptyset$.
\end{theorem}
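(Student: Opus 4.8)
The plan is to follow, step for step, the proof of the preceding (polynomial-growth) theorem. (If $G=\cc$ the family $\mathcal{M}_{G^c}$ is trivial and there is nothing to prove, so we may assume $G\varsubsetneq\cc$.) Write $\lambda_n^{(\sigma)}=L_\sigma a_\sigma^{\,n}(1+o(1))$ with $L_\sigma\in(0,+\infty)$. First I would dispose of the $\zeta$-dependent class: the identity sequence $\mu_n=n$ already satisfies $\lambda_n^{(1)}\to+\infty$ (as $a_1>1$) and
$$\frac{\lambda_n^{(\sigma+1)}}{\lambda_n^{(\sigma)}}=\frac{L_{\sigma+1}}{L_\sigma}\Bigl(\frac{a_{\sigma+1}}{a_\sigma}\Bigr)^{\!n}(1+o(1))\xrightarrow{\ n\to\infty\ }+\infty,\qquad \sigma=1,\dots,\sigma_0-1,$$
since $a_{\sigma+1}>a_\sigma$. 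Hence Theorem \ref{main} gives $U_{mult}(G,[(\lambda_n^{(\sigma)})_n]_{\sigma=1}^{\sigma_0},\zeta)\ne\emptyset$ for every $\zeta\in G$, and it only remains to upgrade this to the $\zeta$-independent class by showing the two classes coincide.

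For the upgrade I would invoke, exactly as in the polynomial case, the proof of Theorem 2.1 in \cite{vlachou2} together with Corollary 3.2 in \cite{vlachou1}; this reduces the equality $U_{mult}(G,[(\lambda_n^{(\sigma)})_n]_{\sigma=1}^{\sigma_0})=U_{mult}(G,[(\lambda_n^{(\sigma)})_n]_{\sigma=1}^{\sigma_0},\zeta)$ to the following combinatorial input: for every strictly increasing sequence $(n_k)_k$ of positive integers one must produce a subsequence $(q_k)_k$ of $(n_k)_k$ and an auxiliary sequence $(p_k)_k$, $p_k<q_k$, with, for each $\sigma$,
$$\lim_k\frac{\lambda_{p_{k+1}}^{(\sigma)}}{\lambda_{q_k}^{(\sigma)}}=+\infty,\qquad \lim_k\frac{\lambda_{q_k}^{(\sigma)}}{\lambda_{p_k}^{(\sigma)}}=+\infty,\qquad \frac{\lambda_{q_k}^{(\sigma)}}{\lambda_{p_k}^{(\sigma)}}\le k\quad(k\text{ large}).$$
The right ansatz, the exponential analogue of $p_k=\left[q_k/(\log k)^{1/d_{\sigma_0}}\right]+1$, is to first pass to $(q_k)_k$ with $q_{k+1}\ge 2q_k$ (possible since $n_k\to+\infty$) and then set $p_k=q_k-\left[\frac{\log k}{2\log a_{\sigma_0}}\right]$ for $k$ large. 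Using only $\lambda_m^{(\sigma)}/a_\sigma^{\,m}\to L_\sigma$ one checks routinely that $q_k-p_k\to+\infty$ makes $\lambda_{q_k}^{(\sigma)}/\lambda_{p_k}^{(\sigma)}\to+\infty$; that $a_\sigma^{\,q_k-p_k}\le a_{\sigma_0}^{\,q_k-p_k}\le\sqrt k$ makes $\lambda_{q_k}^{(\sigma)}/\lambda_{p_k}^{(\sigma)}\le k$ for $k$ large; and that $p_{k+1}-q_k\ge q_k-\left[\frac{\log(k+1)}{2\log a_{\sigma_0}}\right]\to+\infty$ gives the first limit. Feeding these three facts into the cited results identifies the two classes, and combining with the previous paragraph finishes the proof.

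The main obstacle, I expect, is not the nonemptiness of the $\zeta$-dependent class (immediate from Theorem \ref{main} via the identity subsequence) but verifying that these particular sequences really do fall under the Vlachou1--Vlachou2 transfer mechanism — that is, constructing $(p_k),(q_k)$ that simultaneously realise the two blow-up limits and the controlled-growth bound $\le k$. Everything hinges on hitting the correct scale $p_k\approx q_k-\mathrm{const}\cdot\log k$; once that is in place the remaining estimates are routine consequences of the asymptotics $\lambda_n^{(\sigma)}\sim L_\sigma a_\sigma^{\,n}$, exactly as in the polynomial-growth theorem.
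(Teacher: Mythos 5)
Your proposal is correct and follows essentially the same route as the paper: nonemptiness of the $\zeta$-dependent class via Theorem \ref{main}, then the transfer to the $\zeta$-independent class through the Vlachou1--Vlachou2 mechanism by constructing $(p_k),(q_k)$ satisfying the two blow-up limits and the bound $\lambda_{q_k}^{(\sigma)}/\lambda_{p_k}^{(\sigma)}\le k$. Your ansatz $p_k=q_k-\left[\frac{\log k}{2\log a_{\sigma_0}}\right]$ is literally the paper's $p_k=q_k-\left[\frac{\log\sqrt{k}}{\log a_{\sigma_0}}\right]$, and your (stronger) requirement $q_{k+1}\ge 2q_k$ serves the same purpose as the paper's $q_{k+1}>q_k+2\frac{\log\sqrt{k+1}}{\log a_{\sigma_0}}$.
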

\begin{proof}Let $(n_k)_k$ be a strictly increasing sequence of positive integers. We follow ideas presented in \cite{vlachou2} and we choose a subsequence $(q_k)_k$ of $(n_k)_k$ such that:
$$q_{k+1}>q_k+2\dfrac{\log\sqrt{k+1}}{\log a_{\sigma_0}}.$$
For $k$ large enough we set:
$$p_k=q_k-\biggl[\dfrac{\log \sqrt{k}}{\log a_{\sigma_0}}\biggl].$$
Then we have:
$$\dfrac{\log\sqrt{ k}}{\log a_{\sigma_0}}-1 \leq q_k-p_k\leq \dfrac{\log \sqrt{k}}{\log a_{\sigma_0}}$$
$$\text{ and }$$
$$p_{k+1}=q_{k+1}-\biggl[\dfrac{\log \sqrt{k+1}}{\log a_{\sigma_0}}\biggl]>q_k+2 \biggl[\dfrac{\log \sqrt{k+1}}{\log a_{\sigma_0}}\biggl]-\biggl[\dfrac{\log \sqrt{k+1}}{\log a_{\sigma_0}}\biggl]\Rightarrow p_{k+1}-q_k\xrightarrow{k\to +\infty}+\infty.$$
Thus
\begin{align*}
\lim_k\dfrac{\lambda_{p_{k+1}}^{(\sigma)}}{\lambda_{q_k}^{(\sigma)}}=\lim_k a_{\sigma}^{p_{k+1}-q_k}=+\infty\\
\lim_k \dfrac{\lambda_{q_k}^{(\sigma)}}{\lambda_{p_k}^{(\sigma)}}=\lim_k a_{\sigma}^{q_k-p_k}=+\infty \\
\end{align*}
$$
\dfrac{\lambda_{q_k}^{(\sigma)}}{\lambda_{p_k}^{(\sigma)}}=\dfrac{\frac{\lambda_{q_k}^{(\sigma)}}{a_{\sigma}^{q_k}}}{\frac{\lambda_{p_k}^{(\sigma)}}{a_{\sigma}^{p_k}}}\biggl(\dfrac{a_{\sigma}^{q_k}}{a_{\sigma}^{p_k}}\biggl)\\
\leq 2 a_{\sigma_0}^{q_k-p_k}\leq 2 a_{\sigma_0}^{\frac{\log\sqrt{k}}{\log a_{\sigma_0}}}=2\sqrt{k}\leq  k, \ k \ \ \text{large enough}.$$
Following the   proof of Theorem 2.1 in \cite{vlachou2} and Corollary 3.2 in \cite{vlachou1}, we are lead to the conclusion that  $U_{mult}(G, [(\lambda_n^{(\sigma)})_n]_{\sigma=1}^{\sigma_0})=U_{mult}(G, [(\lambda_n^{(\sigma)})_n]_{\sigma=1}^{\sigma_0}, \zeta)$, for every $\zeta\in G$. But Theorem \ref{main} implies that  $U_{mult}(G, [(\lambda_n^{(\sigma)})_n]_{\sigma=1}^{\sigma_0}, \zeta)\ne\emptyset$, for every $\zeta\in G$ and  the result follows.
\end{proof}
In view of the above result, following the proof of Theorem \ref{tan} we are lead to:
\begin{corollary}Let $G\subset \cc$ be a simply connected domain, $\sigma_0\in\nn$ and $a\in\cc\smallsetminus \{-1\}$. Let, in addition,  $(\lambda_n^{(\sigma)})_n$, $\sigma=1,2,\ldots$ be a finite number of sequences of positive integers such that $\lim_{n}\frac{\lambda_n^{(\sigma)}}{a_{\sigma}^{n}}\in (0,+\infty)$, for $1<a_1<a_2<\ldots<a_{\sigma_0}$. If $G\cap (a+1)G=\emptyset$,   then the sequence of linear operators $D_{n,a} H(G)\to [H(G)]^{\sigma_0}$, with
 $D_{n,a}(f)=(T_{a,\lambda_n^{(1)}}(f),T_{a,\lambda_n^{(2)}}(f),\ldots ,T_{a,\lambda_n^{(\sigma_0)}}(f))$ is universal.  
\end{corollary}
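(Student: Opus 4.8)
The plan is to transcribe, almost word for word, the proof of Theorem \ref{tan}, replacing the class $U_{mult}(G,[(n^{\sigma})_n]_{\sigma=1}^{\sigma_0})$ used there by $U_{mult}(G,[(\lambda_n^{(\sigma)})_n]_{\sigma=1}^{\sigma_0})$. The one genuinely new input is that this latter class is non-empty, and that is exactly the content of the theorem just proved. So I would begin by fixing a function $f\in U_{mult}(G,[(\lambda_n^{(\sigma)})_n]_{\sigma=1}^{\sigma_0})$ and then show that the single orbit $\{D_{n,a}(f):n\in\nn\}$ is dense in $[H(G)]^{\sigma_0}$; this is precisely universality of $(D_{n,a})_n$. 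For the density, I would use that $G$ is simply connected, so that by Runge's theorem polynomials are dense in $H(G)$ and hence tuples of polynomials are dense in $[H(G)]^{\sigma_0}$; thus it suffices, given a compact set $K\subset G$, polynomials $p_1,\dots,p_{\sigma_0}$ and $\varepsilon>0$, to find $n$ with $\|T_{a,\lambda_n^{(\sigma)}}(f)-p_\sigma\|_K<\varepsilon$ for all $\sigma$. Replacing $K$ by its polynomial hull (which remains a compact subset of $G$ because $G$ is simply connected, and has connected complement), I may assume $K^c$ is connected.

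The next step is the geometric observation underlying Theorem \ref{tan}. Since $G\cap(a+1)G=\emptyset$, the compact set $\tilde K=(a+1)K$ lies in $G^c$; as $w\mapsto(a+1)w$ is a homeomorphism of $\cc$ it preserves connectedness of the complement, so $\tilde K\in\mathcal{M}_{G^c}$, and the entire functions $g_\sigma(z)=p_\sigma\!\left(\tfrac{z}{a+1}\right)$ lie in $\mathcal{A}(\tilde K)$. Applying the defining property of $U_{mult}(G,[(\lambda_n^{(\sigma)})_n]_{\sigma=1}^{\sigma_0})$ with $K_1=\cdots=K_{\sigma_0}=\tilde K$, data $g_1,\dots,g_{\sigma_0}$, and the compact set of centers $\Gamma=K$, I obtain a strictly increasing sequence $(\mu_k)_k$ with $\sup_{\zeta\in K}\|S_{\lambda^{(\sigma)}_{\mu_k}}(f,\zeta)-g_\sigma\|_{\tilde K}\to 0$ for each $\sigma$. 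Since $(a+1)\zeta\in\tilde K$ whenever $\zeta\in K$, specializing $z=(a+1)\zeta$ and using that $(a+1)\zeta-\zeta=a\zeta$, i.e.
$$S_m(f,\zeta)\bigl((a+1)\zeta\bigr)=\sum_{j=0}^{m}\frac{f^{(j)}(\zeta)}{j!}(a\zeta)^j=T_{a,m}(f)(\zeta),$$
turns this into $\sup_{\zeta\in K}\bigl|T_{a,\lambda^{(\sigma)}_{\mu_k}}(f)(\zeta)-p_\sigma(\zeta)\bigr|\to 0$ for each $\sigma$, that is $D_{\mu_k,a}(f)\to(p_1,\dots,p_{\sigma_0})$ in $[H(G)]^{\sigma_0}$, which proves the density of the orbit.

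I do not expect a real obstacle here: the statement is a corollary, and once the non-emptiness of $U_{mult}(G,[(\lambda_n^{(\sigma)})_n]_{\sigma=1}^{\sigma_0})$ is known — which is the theorem immediately preceding it, resting in turn on Theorem \ref{main} and the comparison argument identifying this class with its fixed-center version — the rest is the same $z\mapsto(a+1)\zeta$ manipulation as in Theorem \ref{tan}, together with Runge's theorem. The only two points that deserve a moment's care are making sure $\tilde K$ has connected complement (so that $\tilde K\in\mathcal{M}_{G^c}$, handled by passing to polynomial hulls inside the simply connected $G$) and making sure the convergence supplied by the class is uniform in the center $\zeta$ over $K$; the latter is automatic, since uniformity over compact sets of centers is part of the definition of the class $U_{mult}(G,[(\lambda_n^{(\sigma)})_n]_{\sigma=1}^{\sigma_0})$.
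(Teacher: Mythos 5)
Your proposal is correct and follows essentially the same route as the paper, which proves this corollary exactly by invoking the non-emptiness of $U_{mult}(G,[(\lambda_n^{(\sigma)})_n]_{\sigma=1}^{\sigma_0})$ from the preceding theorem and then repeating the $z=(a+1)\zeta$ substitution argument from the proof of Theorem \ref{tan}. The extra details you supply (connectedness of the complement of $\tilde K$, uniformity in the center $\zeta$) are points the paper leaves implicit but are handled correctly.
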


V.Vlachou 
\\
Department of Mathematics,\\
University of Patras,\\
26500 Patras,GREECE\\
e-mail: vvlachou@math.upatras.gr

\end{document}